\documentclass[twoside,a4paper,reqno,12pt]{amsart}
\usepackage{amsfonts, amsbsy, amsmath, amssymb, latexsym}
\usepackage{mathrsfs,array}
\usepackage[top=1 in,right=1 in,bottom=1 in,left=1 in]{geometry}
\usepackage[utf8]{inputenc}
\usepackage{hyperref}
\usepackage{tikz-cd}
\usepackage[english]{babel}
\usepackage{graphicx}
\usepackage{mathtools}
\usepackage{floatflt}
\usepackage{multicol}
\usepackage{lipsum}
\usepackage{diagbox}
\usepackage{wrapfig}
\usepackage{relsize}
\usepackage{booktabs}
\usepackage{pgfplots}
\usepackage{pgfplotstable}
\usepackage{enumerate}
\usepackage{blindtext}
\usepackage{amssymb}
\usepackage{amsthm}
\usepackage[mathscr]{euscript}
\usepackage{enumitem}
\usepackage{tikz}
\pgfplotsset{compat=1.18} 

\usetikzlibrary{graphs,graphs.standard}

\usepackage{array,tabularx}

\usepackage[english]{babel}

\newtheorem{theorem}{Theorem}[section]
\newtheorem{lemma}[theorem]{Lemma}
\newtheorem{proposition}[theorem]{Proposition}
\newtheorem{corollary}[theorem]{Corollary}

\theoremstyle{definition}
\newtheorem{definition}[theorem]{Definition}
\newtheorem{example}[theorem]{Example}

\theoremstyle{remark}
\newtheorem{remark}[theorem]{Remark}

\numberwithin{equation}{section}

\newcommand{\K}{\mathbb{K}}

\newcommand{\N}{\mathbb{N}}

\newcommand{\inv}{^{-1}}
\newcommand{\negsigma}{\overline{\Sigma}}

\newcommand{\calH}{\mathcal{H}}

\newcommand{\pair}{(\kappa, \tau)}

\newcommand{\cc}{\pi_0}

\newcommand{\setcomp}{(V_1,\dots,V_p)}

\newcommand{\signedtriangle}[1][3.2]{
  \begin{tikzpicture}[baseline=0pt]
    \coordinate (A) at (0ex,0ex);
    \coordinate (B) at (#1ex,0ex);
    \coordinate (C) at (0.5*#1ex,{0.866*#1ex}); 

     \fill (A) circle (0.08*#1ex);
     \fill (B) circle (0.08*#1ex);
     \fill (C) circle (0.08*#1ex);
    \draw[thick, dotted] (A) -- (B);
    \draw[thick, dotted] (A) -- (C);
    \draw[thick] (B) -- (C);
  \end{tikzpicture}%
}

\newcommand{\twosignededges}[1][3.2]{
  \begin{tikzpicture}[baseline=0pt]
    \coordinate (A) at (0ex,0ex);
    \coordinate (B) at (#1ex,0ex);
    \coordinate (C) at (0.5*#1ex,{0.866*#1ex}); 

     \fill (A) circle (0.08*#1ex);
     \fill (B) circle (0.08*#1ex);
     \fill (C) circle (0.08*#1ex);
    \draw[thick,] (A) -- (B);
    \draw[thick, dotted] (A) -- (C);
  \end{tikzpicture}%
}

\newcommand{\twopositiveedges}[1][3.2]{
  \begin{tikzpicture}[baseline=0pt]
    \coordinate (A) at (0ex,0ex);
    \coordinate (B) at (#1ex,0ex);
    \coordinate (C) at (0.5*#1ex,{0.866*#1ex}); 

     \fill (A) circle (0.08*#1ex);
     \fill (B) circle (0.08*#1ex);
     \fill (C) circle (0.08*#1ex);
    \draw[thick,] (A) -- (B);
    \draw[thick,] (B) -- (C);
  \end{tikzpicture}%
}

\newcommand{\othersignedtriangle}[1][3.2]{\begin{tikzpicture}[baseline=0pt]

    \coordinate (A) at (0ex,0ex);
    \coordinate (B) at (#1ex,0ex);
    \coordinate (C) at (0.5*#1ex,{0.866*#1ex});

     \fill (A) circle (0.08*#1ex);
     \fill (B) circle (0.08*#1ex);
     \fill (C) circle (0.08*#1ex);

	\draw[thick] (A) to[bend left=20] (B);
	\draw[thick,dotted] (A) to[bend right=20] (B);

	\draw[thick,dotted] (A) -- (C);
	\draw[thick] (B) -- (C);

\end{tikzpicture}}

\newcommand{\positiveandnegativeedge}[1][3.2]{\begin{tikzpicture}[baseline=0pt]

    \coordinate (A) at (0ex,0.5ex);
    \coordinate (B) at (#1ex,0.5ex);

     \fill (A) circle (0.08*#1ex);
     \fill (B) circle (0.08*#1ex);

	\draw[thick] (A) to[bend left=30] (B);
	\draw[thick,dotted] (A) to[bend right=30] (B);

\end{tikzpicture}}

\newcommand{\positiveandnegativeedgeloop}[1][3.2]{\begin{tikzpicture}[baseline=0pt]

    \coordinate (A) at (0ex,0.5ex);
    \coordinate (B) at (#1ex,0.5ex);

     \fill (A) circle (0.08*#1ex);
     \fill (B) circle (0.08*#1ex);

	\draw[thick] (A) to[bend left=30] (B);
	\draw[thick,dotted] (A) to[bend right=30] (B);
    \draw[thick] (B) to[loop] (B);

\end{tikzpicture}}

\newcommand{\negativeedge}[1][3.2]{
  \begin{tikzpicture}[baseline=0pt]
    \coordinate (A) at (0ex,0.5ex);
    \coordinate (B) at (#1ex,0.5ex);
     \fill (A) circle (0.08*#1ex);
     \fill (B) circle (0.08*#1ex);

    \draw[thick,dotted] (A) -- (B);
  \end{tikzpicture}%
}

\newcommand{\positiveedge}[1][3.2]{
  \begin{tikzpicture}[baseline=0pt]
    \coordinate (A) at (0ex,0.5ex);
    \coordinate (B) at (#1ex,0.5ex);
     \fill (A) circle (0.08*#1ex);
     \fill (B) circle (0.08*#1ex);

    \draw[thick] (A) -- (B);
  \end{tikzpicture}%
}

\newcommand{\threepath}[1][1]{%
  \begin{tikzpicture}[baseline=0pt]
    \coordinate (A) at (0,0.1*#1);
    \coordinate (B) at (0.5*#1,0.1*#1);
    \coordinate (C) at (#1,0.1*#1);

    \fill (A) circle (0.08*#1);
    \fill (B) circle (0.08*#1);
    \fill (C) circle (0.08*#1);

    \draw[thick] (A) -- (B);
    \draw[thick] (B) -- (C);
  \end{tikzpicture}%
}

\newcommand{\exampletutte}[1][1]{%
  \begin{tikzpicture}[baseline=0pt]
    \coordinate (A) at (0,0.1*#1);
    \coordinate (B) at (0.5*#1,0.1*#1);
    \coordinate (C) at (#1,0.1*#1);

    \fill (A) circle (0.08*#1);
    \fill (B) circle (0.08*#1);
    \fill (C) circle (0.08*#1);

	\draw[thick] (A) to[bend left=30] (B);
	\draw[thick] (A) to[bend right=30] (B);
    \draw[thick] (B) -- (C);
    \draw[thick] (C) to[loop] (C);
    
  \end{tikzpicture}%
}

\newcommand{\exampletutteuno}[1][0.8]{%
  \begin{tikzpicture}[baseline=0pt]
    \coordinate (A) at (0,0.1*#1);
    \coordinate (B) at (0.5*#1,0.1*#1);
    \coordinate (C) at (#1,0.1*#1);

    \fill (A) circle (0.08*#1);
    \fill (B) circle (0.08*#1);
    \fill (C) circle (0.08*#1);

    \draw[thick] (B) -- (C);
    \draw[thick] (C) to[loop] (C);
    
  \end{tikzpicture}%
}

\newcommand{\exampletuttedos}[1][0.8]{%
  \begin{tikzpicture}[baseline=0pt]
    \coordinate (A) at (0,0.1*#1);
    \coordinate (B) at (0.5*#1,0.1*#1);
    \coordinate (C) at (#1,0.1*#1);

    \fill (A) circle (0.08*#1);
    \fill (B) circle (0.08*#1);
    \fill (C) circle (0.08*#1);

    \draw[thick] (C) to[loop] (C);
    
  \end{tikzpicture}%
}

\newcommand{\exampletuttetres}[1][0.8]{%
  \begin{tikzpicture}[baseline=0pt]
    \coordinate (A) at (0,0.1*#1);
    \coordinate (B) at (0.5*#1,0.1*#1);
    \coordinate (C) at (#1,0.1*#1);

    \fill (A) circle (0.08*#1);
    \fill (B) circle (0.08*#1);
    \fill (C) circle (0.08*#1);

	\draw[thick] (A) to[bend left=30] (B);
	\draw[thick] (A) to[bend right=30] (B);
    \draw[thick] (C) to[loop] (C);
    
  \end{tikzpicture}%
}

\newcommand{\vertex}[1][3.2]{
  \begin{tikzpicture}[baseline=0pt]
    \coordinate (A) at (0.5ex,0.5ex);
    \fill (A) circle (0.08*#1ex);

  \end{tikzpicture}%
}

\title{The combinatorial Hopf algebra of signed graphs}

\author{Jean-Christophe Aval}
\address{LaBRI, CNRS, Université de Bordeaux \\
Domaine Universitaire, 351, Cours de la Libération, 33405 Talence \\
France}
\email{aval@labri.fr}

\author{Raquel Melgar}
\address{LaBRI, Université de Bordeaux \\
Domaine Universitaire, 351, Cours de la Libération, 33405 Talence \\
France}
\email{raquel.melgar@labri.fr}

\date{\today}
\thanks{}

\begin{document}

\begin{abstract}
The theory of combinatorial Hopf algebras is a powerful framework for studying algebraic invariants of combinatorial objects. The aim of this work is to incorporate chromatic invariants of signed graphs into this context. We define the combinatorial Hopf algebra of signed graphs. Through this definition, algebraic invariants already known in the literature arise naturally. Using the antipode, we find new proofs of combinatorial reciprocity results for these invariants. We also study bivariate polynomial invariants, both for classical graphs and for signed graphs.
\end{abstract}

\maketitle
\tableofcontents 

\section{Introduction}
The aim of this paper is to study the chromatic invariants of signed graphs from the perspective of Combinatorial Hopf Algebras. 

A signed graph is a graph in which a sign (+ or -) is attached to each edge. In the early 1980s, Zaslavsky defined the notion of coloring for signed graphs, and developed a complete theory around it in a series of articles \cite{Z82}, \cite{Z82+}, \cite{Z91}. 
This includes combinatorial invariants, such as a chromatic polynomial,
analogous in the framework of signed graphs to the classical chromatic polynomial of graphs.
Moreover Zaslavsky obtained nice results, such as a reciprocity result, which gives an interpretation of the chromatic polynomial for negative values of its variable.

In the past years, the notion of combinatorial Hopf algebra (CHA)
has proved its importance to define and study ``canonical" algebraic invariants of combinatorial objects \cite{ABS06}.
A CHA is a graded connected Hopf algebra with a character (a multiplicative linear form).
The main result in \cite{ABS06} is that the algebra $QSym$ of quasisymmetric functions is the terminal object in the category of CHAs.
In other words, for any combinatorial Hopf algebra, there exists a canonical morphism from it to $QSym$, whence the existence (and unicity) of an invariant in $QSym$.
We may also mention here that the context of CHA is powerful to deal with reciprocity results of invariants, through the use of the antipode of the Hopf algebra.

In our context, the Hopf Algebra structure is inherited from that of the incidence Hopf algebra of graphs.
By choosing an adequate character we are able to recover classical algebraic invariants, such as the chromatic polynomial associated to signed colorings (Corollary \ref{coro:chromatic-signed}). 
As a consequence, we obtain a new proof of the reciprocity theorem for signed graphs (Corollary \ref{coro:recip}).

Moreover, through the introduction of a deformation of the character, we are able to embed in the framework of CHAs bivariate invariants, such as the Whitney number polynomial in both the case of unsigned and signed graphs. This notion of deformation of a character is inspired from the work of Benetetti, Hallam and Machacek \cite{BHM16}.

\begin{remark}
In the present work, we deal with {\em balanced} colorings,
in Zaslavsky's terminology.
This means that we shall consider colorings without a color zero.
In Zaslavsky's work, colorings are treated separately according to whether a color zero is used (unbalanced case) or not (balanced case). This means that in Zaslavsky's work these two frameworks are essentially different.   In our study from the CHA point of view, the same thing occurs. The zero free case fits really well into CHA theory, and exploring this is the purpose of this paper. Nevertheless, the unbalanced case requires a more special treatment. This circumstance is in fact interesting itself and it will be the subject of further study.
\end{remark}

\section{Preliminaries}
\subsection{Signed graphs}
We begin by introducing some preliminaries and notations for graphs and signed graphs. A \textit{graph} $G$ is a pair $(V,E)$, where $V$ is the set of vertices and $E$ is the set of edges. A graph may contain multiple edges (i.e., several edges between the same pair of vertices) and loops (edges that connect a vertex to itself). A \textit{signed graph} is a triple $\Sigma = (V,E,\sigma)$ where $(V,E)$ is a graph and $\sigma$ is the \textit{sign function}, a map $\sigma: E \rightarrow \{+,-\}$ that assigns to each edge either a $+$ or a $-$ sign. Signed graphs were introduced by Harary in \cite{H53}, where he also defined the notion of \textit{balance}. A signed graph $\Sigma$ is said to be \textit{balanced} if every cycle of $\Sigma$ contains an even number of negative edges. If a graph is not balanced we say it is \textit{unbalanced}. The following classical theorem provides a useful characterization for balanced signed graphs.

\begin{theorem}[\cite{H53}, Theorem 3]
\label{th: harary-decomposition-theorem}
A signed graph $\Sigma = (V,E,\sigma)$ is balanced if and only if there exists a partition of its vertex set $V = A \sqcup B$ such that all edges between vertices on the same subset are positive and all edges between vertices on different subsets are negative.    
\end{theorem}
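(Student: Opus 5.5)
The idea is to encode the partition as a two-colouring $\varepsilon: V \to \{+1,-1\}$, with $A=\varepsilon^{-1}(+1)$ and $B=\varepsilon^{-1}(-1)$. The required property then reads: for every edge $e$ with endpoints $u,v$,
\[
\sigma(e)=\varepsilon(u)\varepsilon(v).
\]
Here signs are identified with $\pm1$.

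For the easy direction, suppose such a partition exists and let $C$ be a cycle with vertex sequence $v_0,v_1,\dots,v_k=v_0$. The product of the signs of its edges telescopes:
\[
\prod_{i}\varepsilon(v_{i-1})\varepsilon(v_i)=\prod_i \varepsilon(v_i)^2=1.
\]
So $C$ has an even number of negative edges. This also covers loops: a loop has $u=v$, so it must be positive, consistent with a loop being a cycle of length one. Multiple edges between the same pair must likewise have equal signs, consistent with such a pair forming a $2$-cycle.

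For the converse, assume $\Sigma$ is balanced. It suffices to treat each connected component separately, since the colourings of different components can be combined freely. In a component, fix a root $r$ and a spanning tree $T$. Define $\varepsilon(r)=+1$, and for any other vertex $v$ let $\varepsilon(v)$ be the product of the signs along the unique $T$-path from $r$ to $v$. By construction, every tree edge satisfies $\sigma(e)=\varepsilon(u)\varepsilon(v)$.

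Now take a non-tree edge $e=uv$. Together with the tree path from $u$ to $v$, it forms a cycle $C$. When $u=v$ this is a loop, and when $e$ is parallel to a tree edge it is a $2$-cycle. Balance gives $\prod_{f\in C}\sigma(f)=1$. The product of the signs along the tree path from $u$ to $v$ equals $\varepsilon(u)\varepsilon(v)$, because the common part of the two root paths cancels. Hence $\sigma(e)=\varepsilon(u)\varepsilon(v)$.

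The main obstacle is checking that this really gives a cycle in the sense of the balance definition. The tree path is simple, so adding $e$ produces a simple cycle. The degenerate cases (loops and parallel edges) must be read as cycles of length one and two; with that reading the argument goes through uniformly. Finally, setting $A=\varepsilon^{-1}(+1)$ and $B=\varepsilon^{-1}(-1)$ gives the partition, where either part may be empty.
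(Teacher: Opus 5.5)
The paper gives no proof of this statement; it is quoted from Harary \cite{H53}. Your argument is correct and complete, and it is the standard one.

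It is essentially the parity construction the paper itself uses in the proof of Lemma~\ref{lemma: decomposition}, where it is applied to $\overline{\Sigma}$. There one fixes a vertex and places every other vertex according to the sign parity of a path to it. The difference is that you route everything through a spanning tree. As a result, the only cycles you ever invoke are fundamental cycles, which are genuinely simple cycles as the balance definition requires. The paper's version instead asserts that the parity does not depend on the chosen path because ``every closed path'' is even. That tacitly requires reducing closed walks to cycles, a step your version avoids.

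Your explicit reading of loops and parallel edges as cycles of length one and two is exactly the convention under which the statement holds for the multigraphs allowed in this paper. Without it, a single vertex carrying a negative loop would be vacuously balanced, yet it admits no such partition.
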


This statement is illustrated in Figure~\ref{fig: decomp-simple}.
Throughout the article, a positive edge appears as a solid line,
and a negative edge as a dashed line.

\begin{figure}
    \centering
    \includegraphics[width=0.5\linewidth]{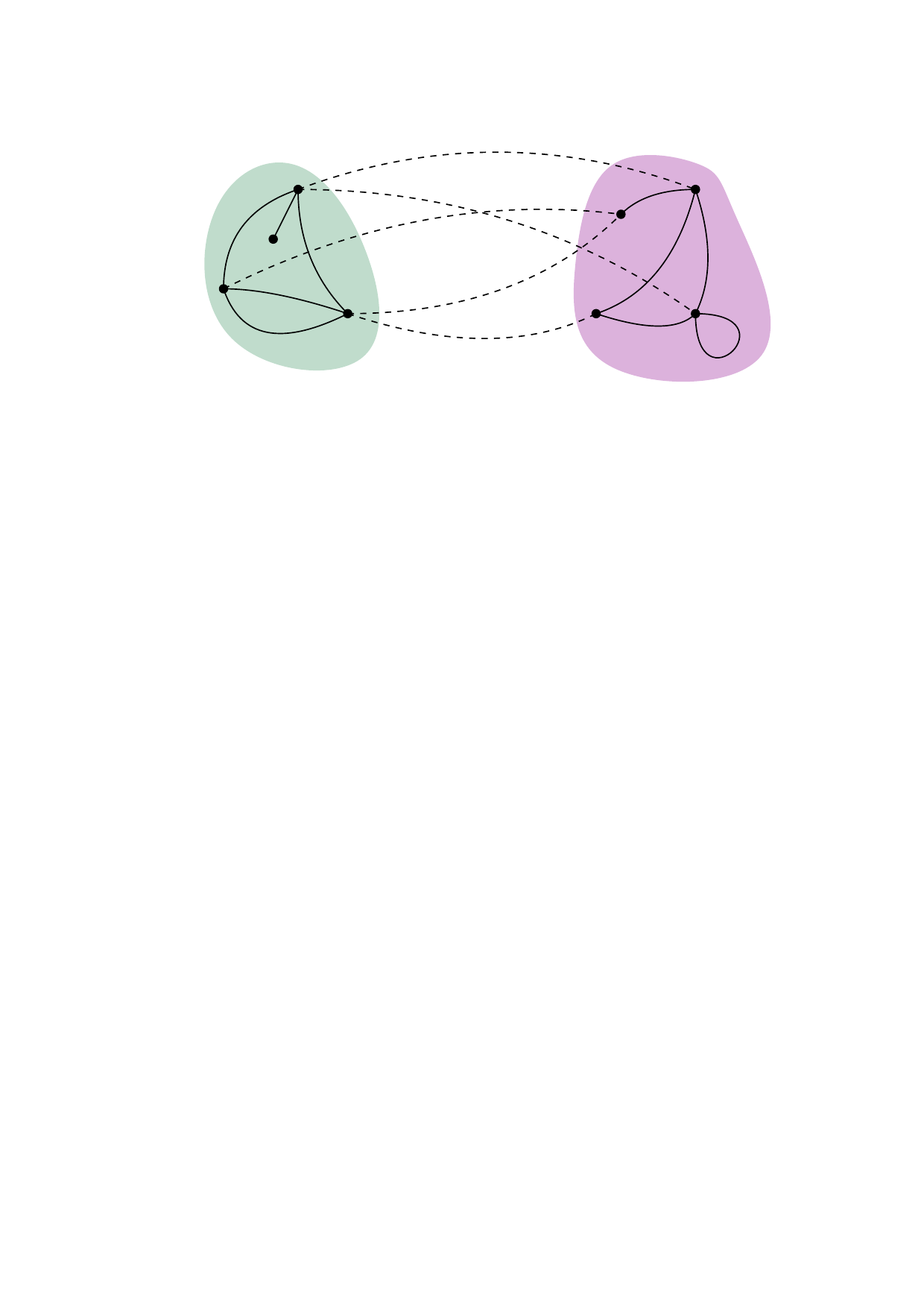}
    \caption{A balanced signed graph $\Sigma = (V,E,\sigma)$ and a partition $V = A \sqcup B$ as in Theorem \ref{th: harary-decomposition-theorem}.}
    \label{fig: decomp-simple}
\end{figure}

Given a graph $G = (V,E)$, a \textit{proper coloring} of $G$ is a map $\kappa: V \rightarrow \N = \{1,2,\dots\}$ verifying that for any edge $\{u,v\}\in E$, $\kappa(u) \neq \kappa(v)$. Graphs with loops do not admit any proper coloring. The \textit{chromatic polynomial} of $G$ is the polynomial $\chi_G$ verifying $$\chi_G(t) = \#\{\text{proper colorings }\kappa: V \rightarrow [t]:= \{1,\dots,t\}\}\text{ for any } t\in \N.$$

The definition of proper coloring for signed graphs first appeared in \cite{Z82}. Given a signed graph $\Sigma = (V,E,\sigma)$, a proper coloring of $\Sigma$ is a map $\kappa: V\rightarrow \{\pm1,\pm2,\dots\}$ such that for any edge $e = \{u,v\}\in E$, $\kappa(u) \neq \sigma(e) \kappa(v)$. In other words, adjacent vertices linked by a positive edge must be assigned different colors, while adjacent vertices linked by a negative edge cannot be assigned opposite colors. Note that if a signed graph has positive loops, it does not admit any proper coloring, and that negative loops do not impose any restrictions on the coloring. Given a signed graph $\Sigma = (V,E,\sigma)$, its \textit{balanced chromatic polynomial} is the polynomial $\chi^b_\Sigma$  satisfying 
$$\chi^b_\Sigma(2t) = \#\{\text{proper colorings } \kappa: V \rightarrow \pm[t]:=\{\pm1,\dots,\pm t\}\}\text{ for any } t\in \N.$$

Given a  signed graph $\Sigma = (V,E,\sigma)$ the \textit{incidence set} of $\Sigma$, denoted $In(\Sigma)$, is the set of pairs $(v,e)$ where $v\in V$ is an endpoint of $e \in E$. An \textit{orientation} of $\Sigma$ is a map $\tau: In(\Sigma) \rightarrow \{+,-\}$, such that
\begin{equation}
\label{eq: def-tau}
\sigma(e) = -\tau(u,e)\tau(v,e)
\end{equation}
for any edge $e = \{u,v\}\in E$. The interpretation of $\tau$ is as follows: if $\tau(v,e) = +$, then the incidence $(v,e)$ is directed towards the vertex $v$; if $\tau(v,e) = -$, then the incidence $(v,e)$ is directed away from $v$. Condition \eqref{eq: def-tau} implies that positive edges are oriented exactly as in unsigned graphs: each positive edge is directed from one endpoint to the other. In contrast, a negative edge is either oriented inward at both endpoints or outward at both endpoints. See Figure \ref{fig: example-orientation} for an example of orientation of a signed graph.

\begin{figure}[h]
    \centering
    \includegraphics[width=0.6\linewidth]{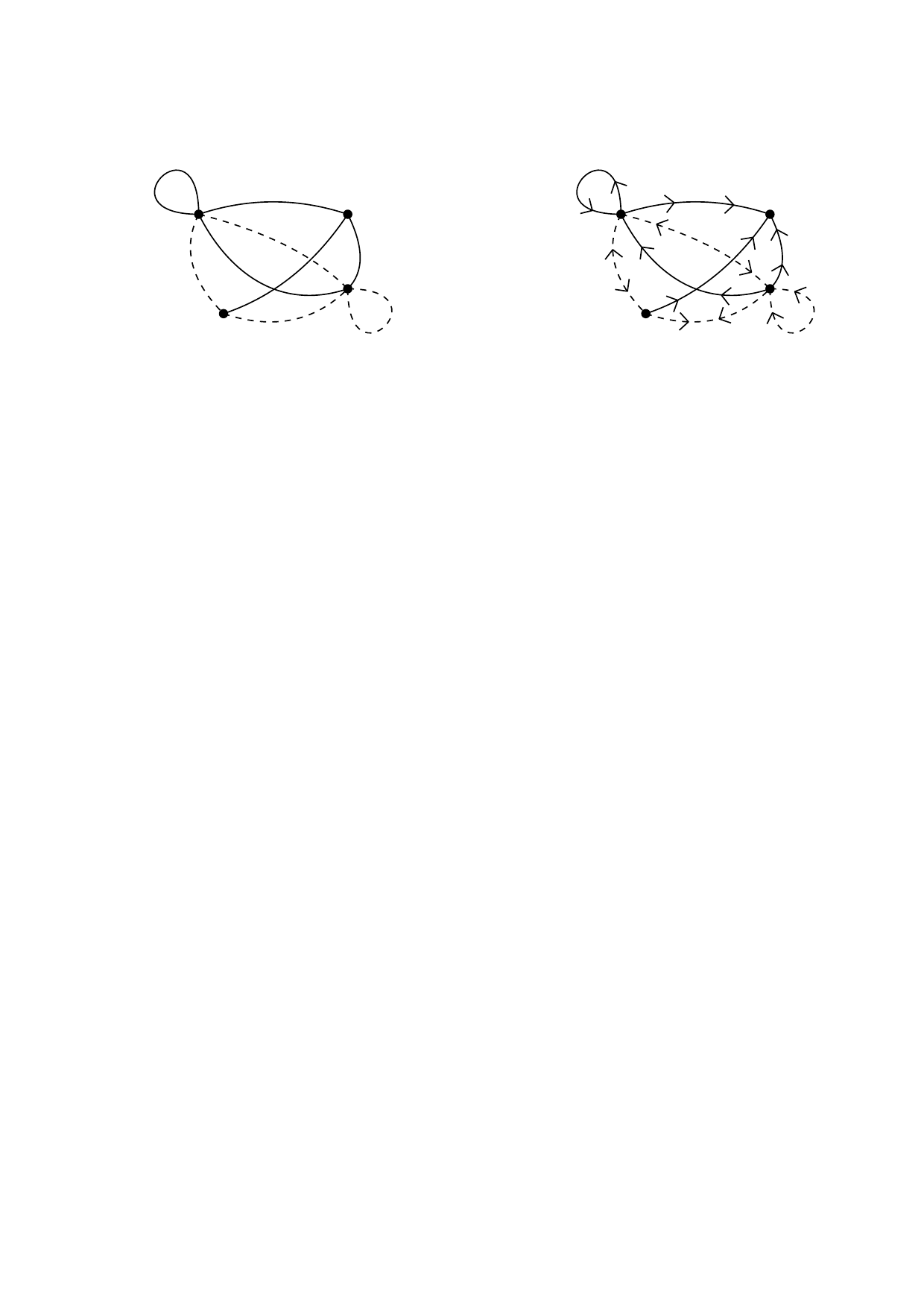}
    \caption{A signed graph $\Sigma$ (left) and an example of orientation $\tau$ of $\Sigma$ (right).}
    \label{fig: example-orientation}
\end{figure}

We say that an orientation $\tau$ of a signed graph $\Sigma$ is \textit{acyclic} if every closed walk in $\Sigma$ contains either a source or a sink. Equivalently, for every closed walk, there exists a vertex such that all edges of the path incident with that vertex are oriented either towards it or away from it. We say a coloring $\kappa: V\rightarrow\N$ and an orientation $\tau$ of $\Sigma$ are \textit{compatible on an edge} $e=\{u,v\}\in E$ if \begin{equation}
    \label{eq: def-compatib}
    \tau(u,e)\kappa(u) + \tau(v,e)\kappa(v) \leq 0.
\end{equation} And we say $\kappa$ and $\tau$ are \textit{compatible} if they are compatible on every $e\in E$. Note that if a coloring $\kappa$ is proper on an edge $e = \{u,v\}$, i.e. $\kappa(u)\neq\sigma(e) \kappa(v)$, then there exist unique values of $\tau(u,e)$ and $\tau(u,v)$ such that $\tau$ is an orientation compatible with $\kappa$. If $\kappa$ is a proper coloring of $\Sigma$, we call the resulting orientation $\tau$ the orientation \textit{induced} by $\kappa$.
The following proposition appears in \cite{Z82}.
\begin{proposition}
    \label{prop: proper-induces-acyclic}
    Let $\Sigma = (V,E,\sigma)$ be a signed graph and $\kappa$ a proper coloring of it, then $\kappa$ induces an acyclic orientation on $\Sigma$. Reciprocally, given an acyclic orientation $\tau$ there always exists a proper coloring $\kappa$ inducing $\tau$.
\end{proposition}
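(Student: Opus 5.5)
The plan is to turn the compatibility condition \eqref{eq: def-compatib} into a strict inequality along each edge, and then read an acyclic orientation as a strict order on ``signed copies'' of the vertices.

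\textbf{Step 1 (strictness).} If $\kappa$ is proper on $e=\{u,v\}$, the induced $\tau$ satisfies the strict inequality $\tau(u,e)\kappa(u)+\tau(v,e)\kappa(v)<0$. For a positive edge we have $\tau(u,e)=-\tau(v,e)$, so the left side is $\pm(\kappa(u)-\kappa(v))$, which is nonzero. For a negative edge $\tau(u,e)=\tau(v,e)$, so the left side is $\pm(\kappa(u)+\kappa(v))$, which is nonzero because $\kappa(u)\neq-\kappa(v)$. Loops are handled by regarding them as having two incidences at the same vertex.

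\textbf{Step 2 (proper $\Rightarrow$ acyclic).} Suppose a closed walk $v_0e_1v_1e_2\cdots e_kv_k=v_0$ has no source or sink. Then at each $v_i$ the two incidences satisfy $\tau(v_i,e_{i+1})=-\tau(v_i,e_i)$. Set $a_i=\tau(v_i,e_{i+1})\kappa(v_i)$. The strict inequality on $e_{i+1}$ reads
\[
a_i<-\tau(v_{i+1},e_{i+1})\kappa(v_{i+1})=a_{i+1}.
\]
So the $a_i$ strictly increase around the walk and return to $a_0$, which is a contradiction.

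\textbf{Step 3 (acyclic $\Rightarrow$ proper), double cover.} Let $\tau$ be acyclic. Build a digraph $D$ on $V\times\{+,-\}$, where the variable attached to $(v,s)$ is meant to be $s\,\kappa(v)$. For each edge $e=\{u,v\}$, put the arcs
\[
(u,\tau(u,e))\to(v,-\tau(v,e)) \quad\text{and}\quad (v,\tau(v,e))\to(u,-\tau(u,e)).
\]
These two arcs encode the required strict inequality, and they are exchanged by the involution $x\mapsto\bar x$, $(v,s)\mapsto(v,-s)$, which reverses arcs.

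First show $D$ is acyclic. A directed cycle in $D$ projects to a closed walk in $\Sigma$. Entering a vertex at $(v,-\tau(v,e))$ and leaving at $(v,\tau(v,e'))$ forces $\tau(v,e')=-\tau(v,e)$, so the walk has no source or sink at $v$. This contradicts the acyclicity of $\tau$.

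\textbf{Step 4 (building $\kappa$).} I expect this to be the main obstacle: the labelling must be strictly increasing along arcs, odd under the involution, and nonzero. Since $D$ is acyclic, define
\[
h(x)=(\text{longest path ending at }x)-(\text{longest path starting at }x).
\]
Then $h(y)\ge h(x)+1$ for every arc $x\to y$. Because the involution reverses paths, $h(\bar x)=-h(x)$.

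This $h$ may vanish, so perturb it: put $g(v,+)=3h(v,+)+1$ and $g(v,-)=3h(v,-)-1$. Then $g(\bar x)=-g(x)$, and $g$ is never zero because it is $\equiv\pm1 \pmod 3$. For an arc $x\to y$ we get
\[
g(y)\ge 3h(x)+3-1>3h(x)+1\ge g(x),
\]
so $g$ is still strictly increasing along arcs.

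Finally set $\kappa(v)=g(v,+)\in\{\pm1,\pm2,\dots\}$. The arc inequalities give $\tau(u,e)\kappa(u)+\tau(v,e)\kappa(v)<0$ on every edge. Hence $\kappa$ is proper: equality $\kappa(u)=\sigma(e)\kappa(v)$ would make the left side zero. Since the compatible orientation on each edge is unique, $\kappa$ induces exactly $\tau$.
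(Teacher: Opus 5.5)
Your proof is correct and complete. The paper gives no proof of this proposition at all: it simply attributes the statement to Zaslavsky \cite{Z82}. So your argument is an independent, self-contained route rather than a variant of one in the text.

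Step~2 is the natural potential argument. Along a closed walk with no source or sink, the quantities $\tau(v_i,e_{i+1})\kappa(v_i)$ would have to increase strictly all the way around the walk, which is impossible.

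The substantive part is the converse. Passing to the double cover $V\times\{+,-\}$ turns the signed constraints into ordinary strict inequalities, organised as a digraph $D$ whose arc set is reversed by the covering involution. Acyclicity of $\tau$, in exactly the closed-walk form the paper uses, then becomes ordinary acyclicity of $D$. The potential $L_{\mathrm{in}}-L_{\mathrm{out}}$ is automatically odd under the involution, and your shift to values $\equiv\pm1 \pmod 3$ removes $0$. This is the signed analogue of the height-function proof for unsigned graphs, with oddness as the extra requirement.

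The usual alternative is geometric. The set of real $x$ with $\tau(u,e)x_u+\tau(v,e)x_v<0$ for all $e$ is an open cone, namely a region of the arrangement $x_u=\sigma(e)x_v$. Once it is known to be nonempty, it contains integer points off the coordinate hyperplanes. However, nonemptiness of that cone is precisely the converse statement, and obtaining it this way needs a Gordan/Farkas-type duality argument. Your construction avoids that duality and is fully explicit. The geometric route, in exchange, identifies acyclic orientations with regions, which is the lattice-point viewpoint underlying \cite{W97}.

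Two small remarks. Along an arc you actually get $h(y)\ge h(x)+2$, though $\ge 1$ suffices. Your two-incidence convention for loops is the right one: with the incidence set as literally defined in the paper, a loop has a single incidence, and \eqref{eq: def-tau} would then forbid orienting a positive loop at all.
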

\subsection{Combinatorial Hopf Algebras }
In this subsection we review some facts and definitions about combinatorial Hopf algebras (CHA). We encourage the reader to visit \cite{ABS06} and \cite{GR14} for a more detailed study of this topic.

Throughout this article, let $\K$ be a field of characteristic zero. A \textit{bialgebra} $\mathcal{H}$ is a $\K$-vector space equipped with four $\K$-linear maps $\mu:\calH\otimes \calH \rightarrow \calH$, $\Delta:\calH\rightarrow\calH\otimes\calH$, $u:\K\rightarrow\mathcal{H}$, and $\epsilon:\calH\rightarrow\K$ called  respectively \textit{product}, \textit{coproduct}, \textit{unit}, and \textit{counit}, verifying:
\begin{enumerate}
    \item $(\calH,\mu,u)$ is an associative algebra;
    \item $(\calH, \Delta, \epsilon)$ is a coassociative coalgebra;
    \item $\Delta$ and $\epsilon$ are morphisms for the algebra structure $(\calH,\mu,u)$.
\end{enumerate}

A bialgebra is a \textit{Hopf algebra} if there exists a $\K$-linear map $S:\calH \rightarrow \calH$ called \textit{antipode} satisfying $$\mu \circ(S\otimes id)\circ \Delta = u \circ \epsilon = \mu \circ(id \otimes S)\circ \Delta.$$

A bialgebra is \textit{graded} if $\calH = \bigoplus_{d\geq 0} \calH_d$ and the grading is compatible with the four maps. That is $\mu(\calH_n \otimes \calH_m)\subseteq \calH_{n+m}$, $\Delta(\calH_d)\subseteq\sum_{n+m = d}\calH_n \otimes \calH_m$, $u(\K)\subseteq \calH_0$, and $\epsilon(\calH_d) = 0$ for $d\geq 1$. The elements in each $\calH_d$ are called \textit{homogeneous components of degree d}. We say a graded bialgebra is \textit{connected} if $\dim_{\K}(\calH_0) = 1$. Graded connected bialgebras are always Hopf algebras and the antipode can be computed recursively.

A \textit{character} of a graded connected Hopf algebra is a map $\zeta:\calH \rightarrow \K$ that is a morphism of algebras. The \textit{convolution product} $\ast$ of two characters $\zeta_1$ and $\zeta_2$ is the character defined by $$\zeta_1 \ast \zeta_2 = m \circ (\zeta_1 \otimes \zeta_2)\circ \Delta$$ where $m$ denotes the product in $\K$. Given a graded connected Hopf algebra, the convolution makes the set of characters into a group with unit $\epsilon$ and inverse given by $\zeta\inv = \zeta \circ S$.

Finally, a \textit{combinatorial Hopf algebra} is a pair $(\calH, \zeta)$ where $\calH$ is a graded connected Hopf algebra such that $dim_{\K}(\calH_d)<\infty$ for all $d\geq 0$ and $\zeta$ is a character. This notion was first defined in \cite{ABS06}.

A CHA of great importance is that of \textit{quasisymmetric functions}. Given any composition $\alpha = (\alpha_1,\dots,\alpha_{\ell(\alpha)})$, the \textit{monomial quasisymmetric function} $M_{\alpha}$ is the element of $\K[[x_1,x_2,\dots]]$ defined as: 
\begin{equation}
    M_\alpha : = \sum_{i_1<\cdots<i_{\ell(\alpha)}}x_{i_1}^{\alpha_1}\cdots x_{i_{\ell(\alpha)}}^{\alpha_{\ell(\alpha)}}.
\end{equation}

 The algebra of quasisymmetric functions is the graded $\K$-vector space $$QSym = \bigoplus_{d\geq 0}QSym_d,$$ where, for each $d\geq 0$, $QSym_d : = span_\K\{M_\alpha:\alpha \models d\}$. Endowed with its standard Hopf algebra structure (see \cite{GR14}), $QSym$ becomes a graded connected Hopf algebra. The CHA of quasisymmetric functions consists in the pair $(QSym, \zeta_{\mathcal{Q}})$ where $\zeta_{\mathcal{Q}}(f) = f(1,0,0,\dots)$ for any $f(x_1,x_2,\dots)\in QSym$.

 Given a composition $\alpha$, denote by $\lambda(\alpha)$ the partition obtained by rearranging the parts of $\alpha$. Then the \textit{monomial symmetric function} $m_\lambda$ is defined as:
 \begin{equation}
     m_\lambda: = \sum_{\alpha:~\lambda(\alpha) = \lambda} M_\alpha.
 \end{equation} The $\K$-vector space spanned by the functions $m_\lambda$, as $\lambda$ runs over all partitions, is a Hopf subalgebra of $QSym$. This Hopf algebra, denoted by $Sym$ is the Hopf algebra of symmetric functions. And $(Sym, \zeta_{\mathcal{Q}}|_{Sym})$ is the CHA of symmetric functions.

The main result in \cite{ABS06} is that $(QSym, \zeta_{\mathcal{Q}})$ is the terminal object in the category of combinatorial Hopf algebras:

\begin{theorem}[\cite{ABS06}, Theorem 4.1]
Given a CHA $(\mathcal{H},\zeta)$ there exists a unique morphism of CHA
\begin{equation}
\label{eq: canonical-morphism}
\Psi: (\mathcal{H},\zeta) \rightarrow (QSym,\zeta_{\mathcal{Q}}).\end{equation}
\end{theorem}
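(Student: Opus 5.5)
The plan is to construct $\Psi$ explicitly and to show that any morphism of CHAs must agree with it. For a homogeneous $h\in\calH_n$ and a composition $\alpha=(\alpha_1,\dots,\alpha_k)\models n$, let $\Delta^{(k-1)}$ be the iterated coproduct. Write $\pi_d$ for the projection onto $\calH_d$, and set
\begin{equation*}
\zeta_\alpha := m^{(k-1)}\circ(\zeta\otimes\cdots\otimes\zeta)\circ(\pi_{\alpha_1}\otimes\cdots\otimes\pi_{\alpha_k})\circ\Delta^{(k-1)}.
\end{equation*}
I would then define
\begin{equation*}
\Psi(h)=\sum_{\alpha\models n}\zeta_\alpha(h)\,M_\alpha,
\end{equation*}
and extend by linearity, with $\Psi(1)=1$.

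\textbf{Uniqueness.} I first show that the formula is forced. Suppose $\Phi$ is a graded coalgebra morphism with $\zeta_{\mathcal Q}\circ\Phi=\zeta$. The coefficient of $M_\alpha$ in an element $f\in QSym_n$ equals $(\zeta_{\mathcal Q}^{\otimes k}\circ(\pi_{\alpha_1}\otimes\cdots\otimes\pi_{\alpha_k})\circ\Delta^{(k-1)})(f)$. This holds because $\Delta M_\alpha=\sum_{\alpha=\beta\cdot\gamma}M_\beta\otimes M_\gamma$ (deconcatenation), and $\zeta_{\mathcal Q}(M_\beta)=1$ exactly when $\beta$ has a single part (or is empty), and is $0$ otherwise. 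Since $\Phi$ commutes with iterated coproducts and with the graded projections, the coefficient of $M_\alpha$ in $\Phi(h)$ is $\zeta^{\otimes k}$ applied to the $\alpha$-graded piece of $\Delta^{(k-1)}h$, i.e. $\zeta_\alpha(h)$. Hence $\Phi=\Psi$.

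\textbf{Existence.} I would check three properties of $\Psi$ in turn. First, $\zeta_{\mathcal Q}\circ\Psi=\zeta$: only $\alpha=(n)$ contributes, and $\zeta_{(n)}(h)=\zeta(h)$. Second, $\Psi$ is a coalgebra map. Coassociativity lets me split $\Delta^{(k-1)}$ as $(\Delta^{(i-1)}\otimes\Delta^{(k-i-1)})\circ\Delta$. Grouping compositions by their deconcatenations $\alpha=\beta\cdot\gamma$ then gives $\Delta\Psi=(\Psi\otimes\Psi)\Delta$, matching the deconcatenation coproduct of $QSym$; the counit compatibility is immediate from gradedness.

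Third, and this is the main obstacle, $\Psi$ is multiplicative. Here I would avoid a direct shuffle computation and use uniqueness instead. Both $h\otimes h'\mapsto\Psi(hh')$ and $h\otimes h'\mapsto\Psi(h)\Psi(h')$ are coalgebra morphisms $\calH\otimes\calH\to QSym$: the first because $\mu$ is a coalgebra map, the second because the product of $QSym$ is one. Composing either with $\zeta_{\mathcal Q}$ gives the character $\zeta\otimes\zeta$, using that $\zeta$ and $\zeta_{\mathcal Q}$ are multiplicative. The uniqueness argument above applies verbatim to any graded connected coalgebra equipped with a linear functional, in particular to $\calH\otimes\calH$ with $\zeta\otimes\zeta$. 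So the two maps coincide, and $\Psi$ is an algebra map.

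The care needed is in checking that the uniqueness argument really only uses the coalgebra structure, gradedness, and the functional. Finite-dimensionality of the graded pieces is not needed; it only ensures that the sum defining $\Psi(h)$ lies in $QSym_n$, which is automatic since it runs over the finitely many $\alpha\models n$.
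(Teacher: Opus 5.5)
The paper does not prove this theorem. It quotes it from \cite{ABS06}, and the only part it uses afterwards is the explicit formula $\Psi(h)=\sum_{\alpha\models n}\zeta_\alpha(h)M_\alpha$. Once the Schmitt coproduct is substituted, this becomes the sum over ordered set compositions $V=V_1\sqcup\cdots\sqcup V_p$ used to identify $\Psi(\Sigma)$ with $X_\Sigma^\parallel$.

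Your proposal is a correct, self-contained proof that yields exactly this formula. The steps are as follows.
\begin{itemize}
\item \emph{Uniqueness.} Reading off the $M_\alpha$-coefficient as $\zeta_{\mathcal{Q}}^{\otimes k}\circ(\pi_{\alpha_1}\otimes\cdots\otimes\pi_{\alpha_k})\circ\Delta^{(k-1)}$ forces uniqueness.
\item \emph{Coalgebra property.} Coassociativity gives it, using the convention $\zeta_\emptyset=\epsilon$. This agrees with $\zeta\circ\pi_0$ because $\calH_0=\K$ and $\zeta(1)=1$, and you should state it explicitly for the cases where $\beta$ or $\gamma$ is empty.
\item \emph{Multiplicativity.} Applying uniqueness to $(\calH\otimes\calH,\zeta\otimes\zeta)$ gives it without any quasi-shuffle computation.
\item \emph{Antipodes.} Compatibility is automatic, since a bialgebra map between Hopf algebras intertwines them.
\end{itemize}

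The common alternative route is by duality. The graded dual of $QSym$ is the free associative algebra on the elements $M_{(n)}^{*}$, with dual basis $M_\alpha^{*}=M_{(\alpha_1)}^{*}\cdots M_{(\alpha_k)}^{*}$. So graded coalgebra maps $\calH\to QSym$ correspond to algebra maps from this free algebra to the graded dual $\calH^{*}$. The condition $\zeta_{\mathcal{Q}}\circ\Psi=\zeta$ forces $M_{(n)}^{*}\mapsto\zeta\circ\pi_n$. This gives $\zeta_\alpha=(\zeta\circ\pi_{\alpha_1})\ast\cdots\ast(\zeta\circ\pi_{\alpha_k})$ at once. Multiplicativity of $\Psi$ then reduces to a one-line fact: because $\zeta$ is a character, $\zeta\circ\pi_n$ has coproduct $\sum_{i+j=n}(\zeta\circ\pi_i)\otimes(\zeta\circ\pi_j)$ in $\calH^{*}$.

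The dual route is shorter once the structure of the dual is known. However, it relies on $\dim_{\K}\calH_d<\infty$ to recover $\Psi$ from its transpose. Yours uses only the graded coalgebra structure and the functional. It therefore proves the stronger statement that $(QSym,\zeta_{\mathcal{Q}})$ is terminal among graded connected coalgebras carrying a linear functional with $\zeta(1)=1$. Your closing sentence on finite-dimensionality is muddled, but its conclusion is right.
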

Moreover, it is proved in the same article that if $(\mathcal{H},\zeta)$ is cocommutative, then the image of $\Psi$ is in  $(Sym, \zeta_{\mathcal{Q}}|_{Sym})$ and that the CHA of symmetric functions is the terminal object in the category of cocommutative CHAs.

Due to this theorem, there is a canonical way to assign a quasisymmetric function to any element $h\in \mathcal{H}$. And, since it is possible to define combinatorial Hopf algebras whose elements are combinatorial objects, we can use the morphism $\Psi$ as a tool for constructing algebraic invariants of combinatorial objects.

\subsection{The CHA of graphs}
We now show how the above framework applies to the CHA of graphs. We consider the $\K$-vector space $\mathcal{G} = \bigoplus_{d\geq 0}\mathcal{G}_d$ where $\mathcal{G}_d$ is the linear span of isomorphism classes of graphs on $d$ vertices. Given a graph $G=(V,E)$ and a subset $S\subseteq V$, let $G|_S$ denote the subgraph of $G$ induced by $S$. Define the following maps on the basis elements: 
\begin{itemize}
    \item product: $\mu(G\otimes H): = G\sqcup H$, the disjoint union of the graphs $G$ and $H$;
    \item coproduct: $\Delta(G) := \sum_{S\subseteq V}G|_S\otimes G|_{V\backslash S}$ where $V$ is the vertex set of $G$;
    \item unit: $u(1):= \emptyset$;
    \item counit: $\epsilon(G) := 1$ if $G =\emptyset$ and $\epsilon(G) := 0$ otherwise, where $\emptyset$ denotes the empty graph.

\end{itemize}
These four maps make $\mathcal{G}$ a graded connected Hopf algebra that was first considered by Schmitt \cite{S94}. For any graph $G = (V,E)$, define the character 
\begin{equation}
\label{eq: zeta-graphs}
    \zeta(G):=\begin{cases}
        1 ~\text{ if } E=\emptyset\\
        0~\text{ otherwise}
    \end{cases}
\end{equation}
and extend it linearly to all elements of $\mathcal{G}$. Then the pair $(\mathcal{G},\zeta)$ is the combinatorial Hopf algebra of graphs.

Given a graph $G = (V,E)$ with vertex set $V = \{v_1,\dots,v_d\}$ its \textit{chromatic symmetric function} $X_{G}$ is defined as 
\begin{equation}
\label{eq: chrom-symm-funct}
    X_G(x_1,x_2,\dots) := \sum_{\kappa \text{ proper}} x_{\kappa(v_1)}\cdots x_{\kappa(v_d)}
\end{equation}
where the sum runs over all proper colorings $\kappa:V\rightarrow \N$ of $G$.
This invariant was introduced by Stanley in \cite{S95}, since then, it has been the focus of extensive research.

Let $\Psi: (\mathcal{G},\zeta)\rightarrow (QSym,\zeta_{\mathcal{Q}})$ be the unique CHA morphism between the CHA of graphs and the CHA of quasisymmetric functions. We know that, as $(\mathcal{G},\zeta)$ is cocommutative, the image of $\Psi$ lives in $Sym$. For any graph $G$, $\Psi(G) = X_G$, the chromatic symmetric function of $G$.

In this way, Stanley's chromatic symmetric function appears as a canonical object arising from the study of the CHA of graphs. Moreover, the chromatic polynomial also arises naturally within this framework. Denote $\phi_t$ the map sending the first $t$ variables of any $f\in QSym$ to $1$ and the rest to $0$. That is
$$\phi_t(f) = f(x_1,x_2,\dots) |_{\substack{x_1=x_2=\cdots =x_t=1,\\x_{t+1}=x_{t+2}= \cdots =0}} =f(\underbrace{1,1,\ldots,1}_{t\text{ ones}},0,0,\ldots)$$ for any $f\in QSym$. From \eqref{eq: chrom-symm-funct} it is not difficult to check that for any graph $G = (V,E)$, $$\phi_t \circ X_G = \# \{\text{proper colorings }\kappa: V \rightarrow [t]\} = \chi_G(t),$$ the chromatic polynomial of $G$. The map $\phi_t$ is a CHA morphism between the CHA of quasisymmetric functions and the CHA of polynomials in one variable (known as the binomial CHA). Given any composition $\alpha = (\alpha_1,\dots,\alpha_{\ell(\alpha)})$, $\phi_t(M_\alpha) = \binom{t}{\ell(\alpha)}$, a polynomial of degree $\ell(\alpha)$ in $t$.

In general, given any CHA $(\mathcal{H},\zeta)$, we will be interested in the composition $\phi_t \circ \Psi$ as a way to associate a polynomial to each element $h\in \mathcal{H}$. In fact, for every $h\in \mathcal{H}$, this polynomial can be obtained as the $t^{th}$ convolution power of the character. That is $$\phi_t\circ\Psi(h) = \zeta ^{\ast t}(h) = m^{(t-1)}\circ \zeta^{\otimes t} \circ \Delta ^{(t-1)}(h).$$

\section{Definition of the CHA of signed graphs}
Given a signed graph $\Sigma = (V,E,\sigma)$ we will note $\negsigma$ the signed graph obtained by reversing the sign of every edge in $\Sigma$. That is $\negsigma : = (V,E,-\sigma)$. For any connected signed graph $\Sigma$ define the map 
\begin{equation}
\label{eq: def-signed-character}
\zeta(\Sigma) := \begin{cases} 
				2\quad & \text{ if } \negsigma \text{ is balanced} \\
				0\quad & \text{ otherwise.}
\end{cases}
\end{equation}

Now extend multiplicatively the map $\zeta$ to all signed graphs. Thus, given a signed graph $\Sigma$ with $c$ connected components one obtains $\zeta (\Sigma) = 2^c$ if $\negsigma$ is balanced and 0 otherwise. 
\begin{lemma}
\label{lemma: decomposition}
Let $\Sigma = (V,E,\sigma)$ be a signed graph. Then $\zeta(\Sigma)$ counts the number of decompositions $V = A \sqcup B$ in which every edge whose endpoints both lie in $A$ or both lie in $B$ is negative, and every edge with one endpoint in $A$ and one in $B$ is positive.
\end{lemma}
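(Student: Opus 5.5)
Call a decomposition $V = A \sqcup B$ \emph{admissible} for $\Sigma$ if every edge with both endpoints in $A$ or both in $B$ is negative, and every edge between $A$ and $B$ is positive. Such a decomposition is exactly a decomposition of the kind in Theorem~\ref{th: harary-decomposition-theorem}, but for $\negsigma$ instead of $\Sigma$, because passing to $\negsigma$ swaps the roles of positive and negative edges. The plan has two steps. First, show that admissible decompositions are compatible with splitting $\Sigma$ into connected components, so the count is multiplicative. Second, treat a connected graph with Theorem~\ref{th: harary-decomposition-theorem}, which gives that an admissible decomposition exists if and only if $\negsigma$ is balanced. Since $\zeta$ was defined by extending multiplicatively from connected graphs, and balance of $\negsigma$ is equivalent to balance of each of its components, it suffices to prove the following: for a connected $\Sigma$, the number of admissible decompositions is $2$ if $\negsigma$ is balanced and $0$ otherwise.

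\textbf{Reduction to connected graphs.} Let $\Sigma_1,\dots,\Sigma_c$ be the components of $\Sigma$, with vertex sets $V_1,\dots,V_c$. Every edge lies inside a single $V_i$. Hence $(A,B)$ is admissible for $\Sigma$ if and only if each restriction $(A\cap V_i, B\cap V_i)$ is admissible for $\Sigma_i$. Conversely, any choice of admissible decompositions of the components assembles into an admissible decomposition of $\Sigma$. The number of admissible decompositions of $\Sigma$ is therefore the product of the numbers for the components. If $\negsigma$ is balanced, each component contributes $2$ and the total is $2^c$. If $\negsigma$ is unbalanced, some component contributes $0$ and so does the product. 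Both cases agree with $\zeta(\Sigma)$.

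\textbf{The connected case.} Assume $\Sigma$ is connected. By Theorem~\ref{th: harary-decomposition-theorem} applied to $\negsigma$, an admissible decomposition exists if and only if $\negsigma$ is balanced; this already settles the count $0$ in the unbalanced case. In the balanced case we must show there are exactly two admissible decompositions. If $(A,B)$ is admissible, so is $(B,A)$. These are distinct ordered decompositions because $V\neq\emptyset$, so $A\neq B$. I read ``decompositions $V=A\sqcup B$'' as ordered pairs, since this is what makes the count $2$ per component; an unordered reading would give $1$.

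For uniqueness up to this swap, fix a vertex $v_0$ and assume $v_0\in A$. Take any vertex $w$ and any path from $v_0$ to $w$, which exists by connectivity. Walking along the path, each edge tells us whether we stay on the same side (a negative edge) or switch sides (a positive edge). So the side of $w$ is forced by the parity of the number of positive edges on the path. Consequently the choice of side for $v_0$ determines the whole decomposition, and there are at most two admissible decompositions, hence exactly two.

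Loops need a remark. A loop has both endpoints in the same part, so admissibility forces it to be negative. This matches balance of $\negsigma$, since a positive loop of $\Sigma$ is a negative loop of $\negsigma$ and forms a cycle with an odd number of negative edges. Multiple edges are covered by the same edgewise condition. I do not expect any real obstacle; the only points needing care are the ordered-versus-unordered convention and the handling of loops and multiple edges.
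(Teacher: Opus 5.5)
Your proof is correct and follows essentially the paper's argument: reduce to connected components, apply Harary's theorem to $\negsigma$ to dispose of the unbalanced case, and use the parity of positive edges along a path from a fixed vertex to show the decomposition is determined up to swapping $A$ and $B$. The one small difference is that you cite Harary's theorem for existence and use the path argument only for uniqueness, whereas the paper uses the path-parity construction (well defined because $\negsigma$ is balanced) for both; you also spell out the component reduction and the ordered-pair convention, which the paper leaves implicit.
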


\begin{proof}

It is clear that it suffices to prove it for connected graphs. By Harary's characterization of balanced signed graphs (Theorem~\ref{th: harary-decomposition-theorem}), we deduce that such a decomposition only exists if $\negsigma$ is balanced. Now, it suffices to show that there exist exactly two of these decompositions. If $\negsigma$ is balanced, we choose and fix $v\in V$. Let us first suppose $v\in A$. For any other $u\in V$, consider a path between $u$ and $v$. To ensure the desired properties of the decomposition, we have to set $u \in A$ if  the path has an even number of positive edges and $u \in B$ otherwise. Note that this assignment does not depend on the choice of the path; if $\negsigma$ is balanced, every closed path has an even number of positive edges. In this way, we obtain a valid decomposition; and exactly another one if we suppose $v\in B$. 
\end{proof}

We now define a structure of CHA for signed graphs. Consider the $\K$-vector space $\mathcal{S} = \bigoplus_{d\geq 0}\mathcal{S}_d$ where each $\mathcal{S
}_d$ is the linear span of ismorphism classes of signed graphs on $d$ vertices. The Hopf algebra structure is inherited from that of Schmitt for unsigned graphs. That is:
\begin{itemize}
    \item product: $\mu(\Sigma_1\otimes \Sigma_2): = \Sigma_1\sqcup \Sigma_2$, the disjoint union of the signed graphs $\Sigma_1$ and $\Sigma_2$;
    \item coproduct: $\Delta(\Sigma) := \sum_{S\subseteq V}\Sigma|_S\otimes \Sigma|_{V\backslash S}$ where $V$ is the vertex set of $\Sigma$;
    \item unit: $u(1):= \emptyset$;
    \item counit: $\epsilon(\Sigma) := 1$ if $\Sigma =\emptyset$ and $\epsilon(\Sigma) := 0$ otherwise, where $\emptyset$ denotes the empty signed graph.
\end{itemize}

\begin{example} Let $s_1 = \othersignedtriangle$ and $s_2 = 3 \cdot \signedtriangle -2\cdot \positiveandnegativeedgeloop$ be two elements of $\mathcal{S}$. Then the product of $s_1$ and $s_2$ is $$\mu(s_1 \otimes s_2)=3\cdot (\othersignedtriangle ~ \signedtriangle) - 2 \cdot (\othersignedtriangle ~ \positiveandnegativeedgeloop)$$ and the coproduct of $s_1$ is 
$$\begin{aligned}
    \Delta(s_1) ={}& \othersignedtriangle \otimes \emptyset 
    ~+~ \vertex \otimes \positiveandnegativeedge
    ~+~ \vertex \otimes \positiveedge
    ~+~ \vertex \otimes \negativeedge\\
    &+~ \positiveandnegativeedge \otimes \vertex
    ~+~ \positiveedge \otimes \vertex
    ~+~ \negativeedge \otimes \vertex
    ~+~ \emptyset \otimes \othersignedtriangle .
    \end{aligned}$$

\end{example}

Clearly, these four maps endow $\mathcal{S}$ with a structure of  graded connected Hopf algebra. The proof is a straightforward adaptation of the corresponding proof for (unsigned) graphs.

\begin{definition}
    The combinatorial Hopf algebra of signed graphs is the pair $(\mathcal{S},\zeta)$ where the character $\zeta$ is the function defined in \eqref{eq: def-signed-character}.
\end{definition}

\begin{definition}
    Given a signed graph $\Sigma = (V,E,\sigma)$, define the \textit{chromatic (unsigned) symmetric function of $\Sigma$} as
    \begin{equation}
        X_\Sigma^\parallel:=\sum_{\kappa \text{ proper}}x_{|\kappa(v_1)|} \cdots x_{|\kappa(v_d)|}
    \end{equation}
    where $V = \{v_1,\dots,v_d\}.$
\end{definition}

When considering a signed graph $\Sigma = (V,E,\sigma)$ such that $\sigma(e) = +$ for all $e\in E$, that is, an \textit{all positive} signed graph, we may see it as a classical graph $G = (V,E)$. Note that in this case the chromatic symmetric function of $G$ and the chromatic (unsigned) symmetric function of $\Sigma$ are not the same. For instance, if $\Sigma = \positiveedge$ is the graph with two vertices and one positive edge between them, $X_\Sigma^\parallel = 2 M_2 + 8 M_{11}$ and $X_{\positiveedge} = 2 M_{11}$.

In \cite{W97}, \cite{KT21}, \cite{coppola2023extension} the authors study the \textit{chromatic signed symmetric function of $\Sigma$}, defined as:
\begin{equation}
    X_{\Sigma}(x_1,x_{-1},x_2,x_{-2},...): = \sum_{\kappa \text{ proper}} x_{\kappa(v_1)}\cdots x_{\kappa(v_d)}.
\end{equation}
 This function is a power series in the set of variables $(x_1,x_{-1},x_2,x_{-2},\dots)$. It lives in a space of functions called \textit{the algebra of signed symmetric functions}. The chromatic (unsigned) symmetric function $X_\Sigma^\parallel$ of a signed graph $\Sigma$ corresponds to the image of $X_\Sigma$ under the map sending $x_{-i}$ to $x_i$ for all $i\in \N$.
 
Before stating the theorem we shall give some intuition on the character $\zeta$ for signed graphs. First, note that in the case of the CHA of graphs, the usual character evaluates
whether a graph $G$ admits a monochromatic proper coloring or not. In fact, it counts the number of proper monochromatic colorings of $G$. Note that the character $\zeta$ plays exactly the same role for signed graphs if we consider a ``monochromatic" signed coloring to be a signed coloring using the color set $\{\pm 1\}$. Whenever one has a signed graph $\Sigma = (V,E,\sigma)$ and a decomposition $V = A \sqcup B$ as in Lemma \ref{lemma: decomposition}, the coloring  $\kappa |_A = 1$, $\kappa|_B = -1$ is clearly proper. When one exchanges the roles of $1$ and $-1$, one finds the only other monochromatic proper signed coloring. This observation is key in the proof of the following theorem.

\begin{theorem}
Let $\Psi$ be the unique morphism of combinatorial Hopf algebras between the combinatorial Hopf algebra of signed graphs $(\mathcal{S},\zeta)$ and $(QSym, \zeta_{Q})$. Then for any signed graph $\Sigma$
\begin{equation}
\label{eq: expression-psi}
\Psi(\Sigma) = \sum_{\kappa \text{ proper}}x_{|\kappa(v_1)|} \cdots x_{|\kappa(v_d)|} = X_\Sigma^\parallel
\end{equation}
where $\{v_1,\dots, v_d \}$ is the vertex set of $\Sigma$. 
\end{theorem}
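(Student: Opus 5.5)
We will use the explicit formula for the canonical morphism given in \cite{ABS06}: for a CHA $(\mathcal{H},\zeta)$ and $h\in\mathcal{H}_d$,
$$\Psi(h)=\sum_{\alpha\models d}\zeta_\alpha(h)\,M_\alpha,\qquad \zeta_\alpha=(\zeta\otimes\cdots\otimes\zeta)\circ(\pi_{\alpha_1}\otimes\cdots\otimes\pi_{\alpha_k})\circ\Delta^{(k-1)},$$
where $\pi_j$ is the projection onto $\mathcal{H}_j$. By uniqueness of $\Psi$, it then suffices to show that the coefficient of $M_\alpha$ in $X_\Sigma^\parallel$ equals $\zeta_\alpha(\Sigma)$ for every composition $\alpha=(\alpha_1,\dots,\alpha_k)\models d$. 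This avoids verifying separately that $X^\parallel$ is a Hopf morphism.

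For the iterated coproduct of $\mathcal{S}$ we have
$$\Delta^{(k-1)}(\Sigma)=\sum_{(V_1,\dots,V_k)}\Sigma|_{V_1}\otimes\cdots\otimes\Sigma|_{V_k},$$
summed over ordered set compositions (blocks possibly empty). The projections keep only those with $|V_i|=\alpha_i$. Hence
$$\zeta_\alpha(\Sigma)=\sum_{(V_1,\dots,V_k),\ |V_i|=\alpha_i}\ \prod_{i=1}^k\zeta(\Sigma|_{V_i}).$$
On the other side, the coefficient of $M_\alpha$ in $X^\parallel_\Sigma$ equals the coefficient of $x_1^{\alpha_1}\cdots x_k^{\alpha_k}$. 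This is the number of proper signed colorings $\kappa$ with $|\kappa|$ taking value $i$ on exactly $\alpha_i$ vertices. Such a $\kappa$ is determined by two pieces of data. The first is the ordered set composition $V_i=|\kappa|^{-1}(i)$. The second is, for each $i$, a choice of signs on $V_i$, i.e. a coloring $V_i\to\{\pm i\}$.

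The key step is to show that $\kappa$ is proper if and only if each restriction $\kappa|_{V_i}$ is proper on $\Sigma|_{V_i}$. Properness only concerns the condition $\kappa(u)\neq\sigma(e)\kappa(v)$. If $|\kappa(u)|\neq|\kappa(v)|$, this holds automatically for either sign, so edges between different blocks impose no constraint. This is exactly why absolute values, rather than signed colors, match the coproduct, which discards edges between blocks.

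It remains to check that the number of proper colorings $V_i\to\{\pm i\}$ of $\Sigma|_{V_i}$ equals $\zeta(\Sigma|_{V_i})$. Relabelling $\pm i$ as $\pm1$, such a coloring corresponds to a decomposition $V_i=A\sqcup B$ with $A=\kappa^{-1}(i)$ and $B=\kappa^{-1}(-i)$. Properness requires two things. A positive edge must join vertices of different colors, so it goes between $A$ and $B$. A negative edge must avoid opposite colors, so it lies inside $A$ or inside $B$. Loops behave consistently: a positive loop is impossible and a negative loop is free. This is precisely the type of decomposition counted in Lemma~\ref{lemma: decomposition}, so the count is $\zeta(\Sigma|_{V_i})$. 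Multiplying over $i$ and summing over set compositions identifies the two coefficients, which proves \eqref{eq: expression-psi}.

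The main obstacle is bookkeeping rather than any conceptual step. One must confirm that the formula of \cite{ABS06} applies with ordered set compositions. One must also check the loop and multiple-edge conventions in Lemma~\ref{lemma: decomposition}: a graph with both a positive and a negative edge between the same pair gives $\zeta=0$, consistent with it having no proper $\{\pm1\}$-coloring.
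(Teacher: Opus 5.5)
Your proposal is correct and follows essentially the paper's argument: you use the explicit formula for $\Psi$ from \cite{ABS06}, Lemma~\ref{lemma: decomposition} to read each $\zeta(\Sigma|_{V_i})$ as the number of proper $\{\pm i\}$-colorings of a block, and the observation that edges between blocks of different absolute value impose no constraint. The one small point is that reading off the coefficient of $M_\alpha$ from $x_1^{\alpha_1}\cdots x_k^{\alpha_k}$ presupposes $X_\Sigma^\parallel\in QSym$; running your bijection with arbitrary colors $j_1<\cdots<j_k$ in place of $1,\dots,k$, as the paper does, removes this assumption.
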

 \begin{proof}
 By applying the explicit formula for $\Psi(\Sigma)$ in \cite{ABS06}, one obtains 
 
  \begin{equation}
  \label{eq: expansion-de-psi}
 \Psi(\Sigma) = \sum_{\alpha \models d}\zeta_\alpha (\Sigma) M_\alpha
 \end{equation}
 where if $\alpha = (\alpha_1,\dots,\alpha_p)$,
 \begin{align}
 \label{eq: zeta-alpha}
 \zeta_\alpha (\Sigma) := \sum_{\substack{V = V_1 \sqcup \cdots \sqcup V_p \\|V_i| = \alpha_i~ \forall i}} \zeta(\Sigma\mid_{V_1})\cdots \zeta(\Sigma\mid_{V_p}).
 \end{align}
 
By Lemma \ref{lemma: decomposition}, $\zeta_\alpha(\Sigma)$ is the number of ordered decompositions of $V$ as $$V = V_1 \sqcup \cdots \sqcup V_p = (A_1 \sqcup B_1) \sqcup \cdots \sqcup (A_p \sqcup B_p)$$ where: 
\begin{itemize}
\item $|A_i \sqcup B_i| = \alpha_i$ for all $i = 1,\dots,p$;
\item if $e$ is an edge whose endpoints are both in some $A_i$ or some $B_i$ then $\sigma(e)=-$;
\item if $e$ is an edge with one endpoint in $A_i$ and one endpoint in $B_i$ for some $i$, then $\sigma(e) = +$. 
\end{itemize}

\begin{figure}[h]
    \centering
    \includegraphics[width=0.3\linewidth]{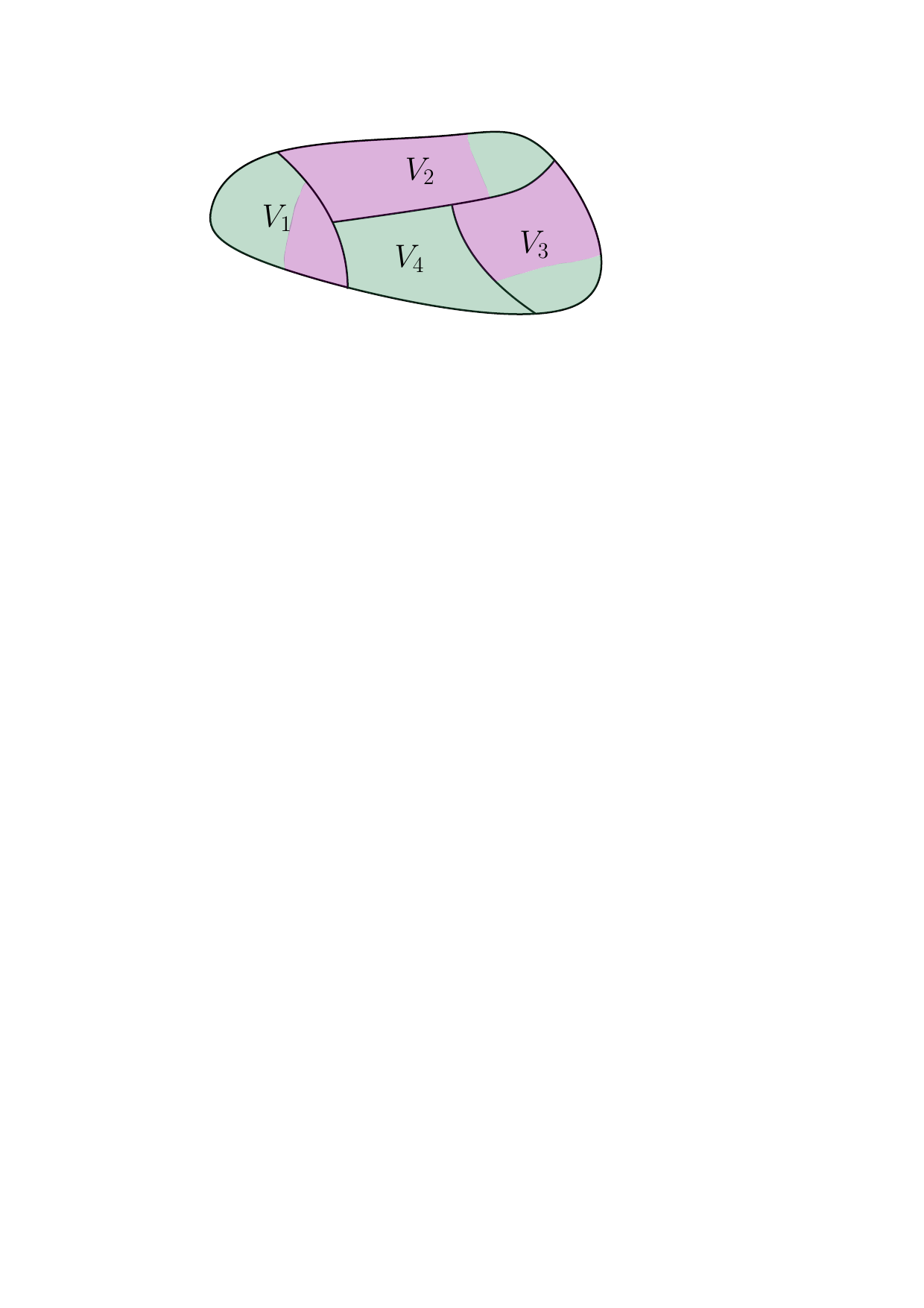}
    \caption{Decomposition $V = V_1 \sqcup \cdots \sqcup V_p$} where each block $V_i$ is decomposed as in Figure \ref{fig: decomp-simple}.
    \label{fig:placeholder}
\end{figure}

For every ordered decomposition as above and positive integers $j_1<\cdots <j_p$ there is a proper coloring $\kappa$ of $\Sigma$ given by $\kappa \vert _{A_{i}} = j_i$ and $\kappa \vert_{B_{i}} = - j_i$ for $i = 1,\dots,p$. This assignment is a bijection between decompositions of this type and proper colorings of $\Sigma$ verifying $|\kappa\inv (j_i)| + |\kappa\inv(-j_i)| = \alpha_i$ for $i = 1,\dots,p$. Thus we obtain the desired equality.
 \end{proof}

And the balanced chromatic polynomial is easily obtained by specialization. 
\begin{corollary} 
\label{coro:chromatic-signed}
Let $\Sigma$ be a signed graph, then 
\begin{equation}
\chi_\Sigma ^b (2t) = \phi_t\circ \Psi (\Sigma) = \zeta^{\ast t}(\Sigma).
\end{equation}
\end{corollary}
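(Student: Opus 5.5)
The plan is to specialize the theorem just proved. Since $\Psi(\Sigma) = X_\Sigma^\parallel$, apply $\phi_t$ to both sides: $\phi_t(X_\Sigma^\parallel)$ sets $x_1=\cdots=x_t=1$ and all other variables to $0$. In the defining sum of $X_\Sigma^\parallel$, the monomial $x_{|\kappa(v_1)|}\cdots x_{|\kappa(v_d)|}$ becomes $1$ exactly when $|\kappa(v)|\le t$ for every vertex $v$, and $0$ otherwise. Hence $\phi_t(X_\Sigma^\parallel)$ is the number of proper colorings $\kappa: V\to \pm[t]$. By definition of the balanced chromatic polynomial this is $\chi^b_\Sigma(2t)$. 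This gives the first equality.

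There is a small point to justify here. $X_\Sigma^\parallel$ is a formal power series, and evaluation is only defined on $QSym$. I would therefore compute through the $M$-expansion \eqref{eq: expansion-de-psi} instead. We have
\[
\phi_t\Psi(\Sigma)=\sum_\alpha \zeta_\alpha(\Sigma)\binom{t}{\ell(\alpha)}.
\]
The bijection in the proof of the theorem shows that this sum counts proper colorings whose absolute values take exactly $\ell(\alpha)$ distinct values $j_1<\cdots<j_p$ in $[t]$, with the prescribed multiplicities. Summing over $\alpha$ then counts all proper colorings with values in $\pm[t]$. I expect this bookkeeping to be the only mildly delicate step, and it is immediate from the bijection already established.

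For the second equality, I would use the general fact recalled in the preliminaries: for any CHA $(\mathcal{H},\zeta)$ we have $\phi_t\circ\Psi=\zeta^{\ast t}$. To verify it directly, note that $\phi_t$ is a CHA morphism to the binomial CHA, so $\phi_t\circ\Psi$ is the unique CHA morphism to polynomials. Alternatively, one can expand $\zeta^{\ast t}(\Sigma)$ by iterating the coproduct. This gives a sum over ordered decompositions $V=V_1\sqcup\cdots\sqcup V_t$, where empty blocks are allowed, of $\prod_i\zeta(\Sigma|_{V_i})$. By Lemma~\ref{lemma: decomposition}, each such term counts the splittings $V_i=A_i\sqcup B_i$. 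Coloring $A_i$ by $i$ and $B_i$ by $-i$ gives a bijection with proper colorings into $\pm[t]$. Both routes give $\chi^b_\Sigma(2t)$ for every $t\in\N$, which concludes the proof.
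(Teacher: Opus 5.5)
Your proposal is correct and follows the paper's proof. You specialize $\Psi(\Sigma)=X_\Sigma^\parallel$ at $x_1=\cdots=x_t=1$ with all other variables $0$ to count proper colorings into $\pm[t]$, and the second equality is the general identity $\phi_t\circ\Psi=\zeta^{\ast t}$ recalled in the preliminaries; your extra checks via the $M$-expansion and via the iterated coproduct with Lemma~\ref{lemma: decomposition} are valid but not needed, since $X_\Sigma^\parallel$ already lies in $QSym$ and can be evaluated directly.
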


\begin{proof}
In view of \eqref{eq: expression-psi}, setting the first $t$ variables of $\Psi(\Sigma)$ equal to one and the remaining variables equal to zero yields the number of proper colorings of $\Sigma$ using colors in the set $\{\pm1,\dots,\pm t\}$.
\end{proof}

\begin{remark}
We may deduce from \eqref{eq: expansion-de-psi} that the function $\chi_\Sigma^b(z)$ is indeed a monic polynomial of degree $|V|$. To see this, note that $\Phi_t(M_\alpha) = \binom{t}{\ell(\alpha)}$ where $\ell(\alpha)$ is the length of the composition $\alpha$. This binomial coefficient is a polynomial in $t$ of degree $\ell(\alpha)$ and each $\zeta_\alpha(\Sigma)$ is divided by $2^{\ell(\alpha)}$. To obtain the leading monomial, consider the composition $\alpha = (1,\dots,1)$ and observe that $\zeta_\alpha(\Sigma) = 2^{\ell(\alpha)}=2^{|V|}$.
\end{remark}

\begin{example} Consider $\Sigma = \signedtriangle$. The terms $\zeta_\alpha(\Sigma)$ for $\alpha \models 3$ are:
\begin{itemize}
\item $\zeta_3(\Sigma) = \zeta(\Sigma) = 0$
\item $\zeta_{12}(\Sigma) = \zeta_{21}(\Sigma) = \zeta(\negativeedge)\zeta(\vertex) + \zeta(\negativeedge)\zeta(\vertex) + \zeta(\positiveedge)\zeta(\vertex) =  12$
\item $\zeta_{111}(\Sigma) = \zeta(\vertex)^3 = 8.$
\end{itemize}

Thus the chromatic (unsigned) symmetric function of $\Sigma$ is
\[ X_\Sigma^\parallel = \Psi(\Sigma) = 12 M_{12} + 12M_{21} + 8M_{111},\]
by composing with $\phi_t$, we obtain the following expression for the balanced chromatic polynomial $$\chi_\Sigma ^b (2t) = \phi_t \circ \Psi(\Sigma) = 24 \binom{t}{2} + 8\binom{t}{3}.$$
\end{example}

\begin{remark} 
Zaslavsky's work \cite{Z82+} includes the notion of \textit{switching}. 
To switch a signed graph with respect to a vertex means to change the signs of all the edges incident to that vertex. 
By performing a sequence of such operations at different vertices, one obtains the switching equivalence class of the signed graph.
It should be clear that all the tools and results in the present work are invariant under switching.
This is natural, since two signed graphs in the same switching class share the same balanced chromatic polynomial and the same chromatic unsigned symmetric function.
It appears in \cite[Corollary 3.3]{Z82+} that a signed graph is balanced if and only if it is switching equivalent to a graph with only positive edges.
Consequently, the character $\zeta$ may be interpreted for connected signed graphs as the function that sends a graph $\Sigma$ to $2$ if $\Sigma$ is in the switching class of a graph with only negative edges.
But in our presentation, as it is usually the case, we consider signed graphs rather than equivalence classes for the switching.
\end{remark}

\section{Reciprocity results}
The results in this section are not new in the literature, but they demonstrate the power of the CHA perspective for obtaining combinatorial reciprocity results via the antipode. The main idea is that, given a morphism of combinatorial Hopf algebras, the antipode must be preserved, which means that the following diagram is commutative. 

\begin{equation}
\label{diag: antipode}
\begin{tikzcd}
	{(\mathcal{S},\zeta)} && {(Sym,\zeta_{\mathcal{Q}}|_{Sym})} && {(\mathbb{K}[t],\zeta)} \\
	\\
	{(\mathcal{S},\zeta)} && {(Sym,\zeta_{\mathcal{Q}}|_{Sym})} && {(\mathbb{K}[t],\zeta)}
	\arrow["\Psi", from=1-1, to=1-3]
	\arrow["S"{description}, from=1-1, to=3-1]
	\arrow["{\phi_t}", from=1-3, to=1-5]
	\arrow["S"{description}, from=1-3, to=3-3]
	\arrow["S"{description}, from=1-5, to=3-5]
	\arrow["\Psi", from=3-1, to=3-3]
	\arrow["{\phi_t}", from=3-3, to=3-5]
\end{tikzcd}
\end{equation}

Let $\omega$ denote the usual involution of $Sym$, the algebra of symmetric functions. That is, $\omega$ is the algebra automorphism determined by $\omega(e_d)=h_d$, where $e_d$ and $h_d$ denote respectively the $d$-th elementary and complete homogeneous symmetric functions.
 If $f$ is an homogeneous symmetric function of degree $d$, the antipode $S$ in $Sym$ satisfies \begin{equation} \label{eq: antipode-omega}S(f) = (-1)^d \omega(f).\end{equation} We encourage the reader to consult \cite{GR14} for a detailed review of these questions. Our aim is to obtain a combinatorial interpretation of the image of $X_{\Sigma}^\parallel$ under this involution.

\begin{remark}
As the Hopf Algebra structure of $\mathcal{S}$ is the same as $\mathcal{G}$, the antipode is the same. We will use the formula for the antipode given directly by Takeuchi's formula
\begin{equation}
\label{eq: takeuchi}
S(\Sigma) = \sum_{p\geq 0}(-1)^p\sum_{V = V_1 \sqcup \cdots \sqcup V_p} \Sigma |_{V_1} \sqcup \cdots \sqcup \Sigma|_{V_p}\text{ for any signed graph }\Sigma
\end{equation}
 and not the cancellation free formula of Humpert and Martin \cite{HM12}. It might be interesting to find proofs using this formula.
\end{remark}

The following theorem is a generalization of the analog result for classical graphs due to Stanley \cite[Theorem 4.2]{S95}. And it is in fact the specialization in $x_i \mapsto x_{-i}$ of one of the main result in Wolfgang's thesis \cite{W97}. 
His approach is different, and relies essentially on the enumeration of lattice points in hyperplane arrangements.
We also mention here that for the polynomial formulation (Corollary~\ref{coro:recip}), the original proof due to Zaslavsky \cite{Z82} is also different, since it is based on a recursive argument.

\begin{theorem} Let $\Sigma$ be a signed graph then
\begin{equation}
\omega(X_{\Sigma}^\parallel(x_1,x_2,...)) = \sum_{\substack{\pair}}x_{|\kappa(v_1)|}\cdots x_{|\kappa(v_d)|}
\end{equation}
where the sum runs over the pairs $(\kappa, \tau)$ where $\tau$ is an acyclic orientation of $\Sigma$ and $\kappa$ is a coloring compatible with $\tau$. 
\end{theorem}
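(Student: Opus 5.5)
We follow the strategy suggested by diagram \eqref{diag: antipode}: since $\Psi$ is a Hopf morphism, it commutes with the antipode, so by \eqref{eq: antipode-omega}
\[
\omega(X_\Sigma^\parallel) = (-1)^d S(\Psi(\Sigma)) = (-1)^d \Psi(S(\Sigma)),
\]
where $d=|V|$. We then expand $S(\Sigma)$ by Takeuchi's formula \eqref{eq: takeuchi} and apply $\Psi$. Since $\Psi$ is multiplicative, the previous theorem gives
\[
\omega(X_\Sigma^\parallel) = \sum_{p\ge 0}(-1)^{d+p}\sum_{V=V_1\sqcup\cdots\sqcup V_p} X^\parallel_{\Sigma|_{V_1}}\cdots X^\parallel_{\Sigma|_{V_p}}.
\]
Here the $V_i$ are nonempty and ordered. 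The product $X^\parallel_{\Sigma|_{V_1}}\cdots X^\parallel_{\Sigma|_{V_p}}$ is the generating function of tuples $(\kappa_1,\dots,\kappa_p)$ with $\kappa_i$ a proper coloring of $\Sigma|_{V_i}$. Gluing these gives a coloring $\kappa$ of $V$ that is proper inside each block, with arbitrary behaviour on edges between blocks.

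Next we extract coefficients. Fix a coloring $\kappa: V\to \pm\N$ and compute the signed number $c(\kappa)=\sum (-1)^{d+p}$ over ordered set compositions $(V_1,\dots,V_p)$ such that $\kappa$ is proper on every $\Sigma|_{V_i}$. The goal is to show that $c(\kappa)$ equals the number of acyclic orientations $\tau$ compatible with $\kappa$. Note that $\kappa$ itself never changes, so the monomials $x_{|\kappa(v_1)|}\cdots x_{|\kappa(v_d)|}$ on the two sides match automatically.

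For this count we mimic Stanley's argument for \cite[Theorem 4.2]{S95}. Consider the auxiliary signed graph $\Sigma_\kappa$ formed by the \emph{improper} edges of $\kappa$, namely the edges $e=\{u,v\}$ with $\kappa(u)=\sigma(e)\kappa(v)$. These are exactly the edges on which \eqref{eq: def-compatib} holds with equality for both choices of orientation. Every proper edge has its orientation forced by $\kappa$. The required condition is that no improper edge lies inside a block $V_i$, i.e. every improper edge joins two different blocks.

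An ordered set composition with this property induces an orientation of the improper edges by sending each edge from the earlier block to the later one. We must check that the result is an orientation in the signed sense \eqref{eq: def-tau}. We will need to interpret the block order together with signs, e.g. an improper negative edge $\kappa(u)=-\kappa(v)$ gets the orientation in which both incidences point in or both point out, chosen according to the block order and the sign of the color. Together with the forced orientations on the proper edges, this produces a full orientation $\tau$ compatible with $\kappa$.

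We then group the ordered set compositions by the resulting $\tau$. For a fixed acyclic $\tau$, the compositions inducing it correspond to ordered set partitions compatible with a fixed poset (the transitive closure of $\tau$ restricted to improper edges). The alternating sum of $(-1)^{d+p}$ over such compositions is the standard reduced Euler characteristic computation from Stanley's proof, and it equals $1$ when the poset is acyclic. Cyclic choices contribute $0$, either because no composition realizes them or by a sign-reversing involution.

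The main obstacle is the signed-graph subtlety. Compatibility \eqref{eq: def-compatib} on improper edges, including negative edges with $\kappa(u)=-\kappa(v)$, must match exactly the ``earlier block/later block'' orientations. We must also show that acyclicity of $\tau$ in Zaslavsky's sense (every closed walk contains a source or sink) corresponds to acyclicity of the induced relation. On proper edges, $\kappa$ already rules out cycles through strict inequalities, in the spirit of Proposition \ref{prop: proper-induces-acyclic}. So the only remaining cycles would live in the improper subgraph, where $|\kappa|$ is constant and the block order gives a linear potential.

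The first thing we would try is to reduce to the unsigned case by switching each vertex with $\kappa(v)<0$. Switching preserves both sides (cf.\ the switching remark). After switching, all colors are positive, improper edges are positive edges joining equal colors, and the argument becomes literally Stanley's, applied with the monomials $x_{|\kappa|}$.
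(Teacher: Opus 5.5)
Your proof is correct, and it shares the paper's skeleton. You apply the antipode through $\Psi$ and Takeuchi's formula, then fix $\kappa$ and group the admissible set compositions by the orientation they induce. Each acyclic $\tau$ compatible with $\kappa$ then contributes exactly $1$ after the global sign $(-1)^d$.

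The two key steps are done differently. The paper builds $\tau$ from $((V_1,\dots,V_p),\kappa)$ through the auxiliary proper coloring $\kappa'$ and Proposition~\ref{prop: proper-induces-acyclic}, and cancels each class with an explicit sign-reversing involution. You instead switch at the vertices with $\kappa(v)<0$, so that improper edges become positive edges between equal colors. Telescoping the compatibility inequalities around a coherent closed walk shows that only improper edges can lie on one. You then identify each class with the surjective strict order-preserving maps from the poset $P$ generated by $\tau$ on the improper edges onto a chain. Their signed count is $(-1)^d\overline{\Omega}(P,-1)=\Omega(P,1)=1$, with $\Omega,\overline{\Omega}$ the order and strict order polynomials.

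Two points need tightening. First, the switching step needs a per-coloring statement: switching $\Sigma$, $\kappa$ and $\tau$ simultaneously preserves properness on each block, compatibility, and sources/sinks of closed walks. The paper's switching remark does not say this. Second, the class computation is order-polynomial reciprocity, not Stanley's argument in \cite{S95}, which goes through $P$-partitions.

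Summed over classes, your route shows that the coefficient of a fixed $\kappa$ is $(-1)^d\chi_I(-1)$, where $I$ is the underlying graph of improper edges. In other words, the theorem becomes Stanley's acyclic-orientation count applied coloring by coloring. This is less self-contained than the paper's involution, but it is cleaner on signs.

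That matters, because the paper orders vertices by $|\kappa'_\tau|$ for an arbitrary proper $\kappa_\tau$ inducing $\tau$. Its claim that an improper edge between $V_{i-1}$ and $V_i$ runs from lower to higher index requires $\kappa_\tau$ to carry the signs of $\kappa$. For a counterexample, take one positive edge $ac$ plus an isolated vertex $b$, with $\kappa\equiv 1$. Let $\tau$ be the orientation induced when $c$ precedes $a$, and let $\kappa_\tau=(-1,-2,-3)$ on $(a,b,c)$. Then $\theta(\{b,c\},\{a\})=(\{c\},\{b\},\{a\})$ but $\theta(\{c\},\{b\},\{a\})=(\{c\},\{a,b\})$, so $\theta$ is not an involution. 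Normalizing to positive colors, as you do, is exactly what avoids this.
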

 
\begin{proof}
Since the diagram in \ref{diag: antipode} is commutative, we have $S(\Psi(\Sigma)) = \Psi(S(\Sigma))$. Then, applying \eqref{eq: antipode-omega} on the left-hand side and \eqref{eq: takeuchi} on the right-hand side  one obtains
\begin{equation}
\label{eq: first-equalities}
(-1)^d \omega(X_{\Sigma}^\parallel) = \sum_{p\geq 0}(-1)^p\sum_{\substack{V = V_1 \sqcup \cdots \sqcup V_p}} X^\parallel_{\Sigma |_{V_1}} \cdots X^\parallel_{\Sigma|_{V_p}} = \sum_{p\geq 0}(-1)^p\sum_{\substack{V = V_1 \sqcup \cdots \sqcup V_p \\ \kappa:~\kappa|{V_i} \text{ proper}}} x^{|\kappa|}
\end{equation}

Now, consider any set composition of the vertex set $\setcomp$ and a coloring $\kappa$ of $\Sigma$ such that $\kappa |_{V_i}$ is proper for $i = 1,\dots,p$. We want to deduce from the pair $(\setcomp, \kappa)$ an orientation  $\tau$ of $\Sigma$. To do this we first derive a {\em proper} coloring $\kappa'$ of $\Sigma$. We consider 
$n=\max_{u\in V}|\kappa(u)|$, 
and for $u\in V_i$, we set
\begin{equation}
\label{eq: def-coloring}
    \kappa'(u) := \begin{cases} 
				(\kappa(u) -1 )\cdot n + i \quad & \text{ if } \kappa(u) >0 \\
				(\kappa(u) +1 )\cdot n - i \quad & \text{ if } \kappa(u) <0.
\end{cases}
\end{equation}

We observe that $\kappa'$ is a proper coloring of $\Sigma$:
\begin{itemize}
\item inside any $V_i$ because $\kappa$ is proper on $V_i$;
\item on any edge between two different subsets $V_i$ and $V_j$ because $\kappa '$ has different absolute values modulo $n$.
\end{itemize}

\begin{figure}
    \centering
    \includegraphics[width=0.7\linewidth]{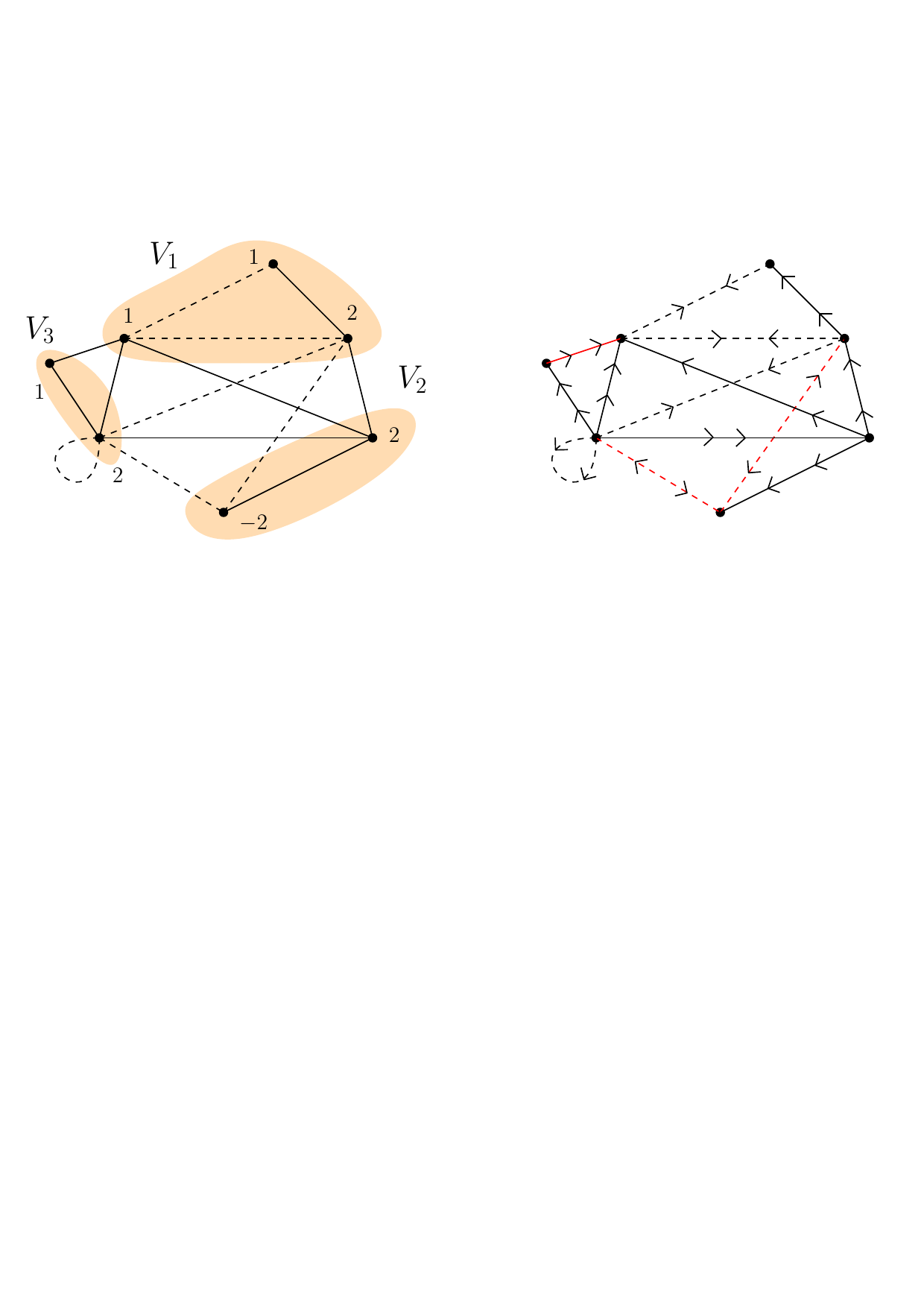}
    \caption{A pair $((V_1,\dots,V_p),\kappa)$ such that $\kappa|_{V_i}$ is a proper for every $i$ (left) and the acyclic orientation $\tau$ induced by this pair (right). On properly colored edges, the orientation is imposed by the coloring, whereas on the edges in the impropriety set (shown in red), the orientation is determined by the partition $(V_1,\dots,V_p)$.}
    \label{fig:placeholder}
\end{figure}
Thus, by Proposition \ref{prop: proper-induces-acyclic}, $\kappa '$ induces an acyclic orientation $\tau$ on $\Sigma$. Moreover, on any edge where $\kappa$ is proper, $\kappa'$ induces the same orientation as $\kappa$. In this way, we define the orientation induced by a pair $(\setcomp, \kappa)$. Note that this assignment is surjective, every acyclic orientation can be obtained in this way. To see this, consider any acyclic orientation $\tau$, and choose $\kappa$ any proper coloring compatible with $\tau$ (thanks to Proposition \ref{prop: proper-induces-acyclic}). Then for any composition on the vertex set $\setcomp$, the pair $(\setcomp, \kappa)$ induces $\tau$. This assignment is by far not injective, there are (many) cancellations in the formula, and we now compute them.

The value in \eqref{eq: first-equalities} is then equal to 
$$\sum_{(\kappa, \tau)} \sum_{p \geq 0} (-1) ^p \sum_{\substack{V = V_1\sqcup \cdots \sqcup V_p \\ \kappa: ~ \kappa|_{V_i} \text{ proper}\\ \text{ inducing  }\tau}}x^{|\kappa |}.$$

To conclude the proof we have to show that the cancellations give us: 
$$\sum_{p \geq 0} (-1) ^p \sum_{\substack{V = V_1\sqcup \cdots \sqcup V_p \\ \kappa: ~ \kappa|_{V_i} \text{ proper}\\ \text{ inducing  }\tau}} 1 = (-1) ^d.$$

To do this, we define a map $\theta: \Pi_{(\kappa,\tau)} \rightarrow \Pi_{(\kappa,\tau)}$  where $$\Pi_{(\kappa,\tau)} := \{\setcomp:~ (\setcomp,\kappa)\text{ induces }\tau\}.$$ The idea is essentially a sign-reversing involution argument: $\theta$ changes the parity of $p$ for all elements of $\Pi_{(\kappa,\tau)}$ except one. This exceptional element is a fixed point of $\theta$, with $p = d$.

In order to define $\theta: \Pi_{(\kappa,\tau)} \rightarrow \Pi_{(\kappa,\tau)}$, fix $\kappa_\tau$ any proper coloring of $\Sigma$ inducing $\tau$. Define $\kappa_\tau'$ the coloring induced by a pair $((V_1,\dots,V_d),\kappa_\tau)$ as in equation \eqref{eq: def-coloring}, where each set $V_i$ in the decomposition contains exactly one element. The coloring $\kappa_\tau'$ is a proper coloring inducing $\tau$ and all the colors it uses are different in absolute value. Thus we may order the vertices $v$ in $V$ with respect to $|\kappa'_\tau(v)|$: 
$$|\kappa'_\tau(v_1)|<|\kappa'_\tau(v_2)|<\cdots<|\kappa'_\tau(v_d)|$$
Now we are going to apply the following algorithm.

We consider the vertices with respect to the order $v_j$, starting from $v_1$. So let $v_j$ be an element of $V_i$.
\begin{itemize}
    \item If $|V_i| > 1$ then $\theta(\setcomp) = (V_1,\dots,V_{i-1},V_i\backslash \{v_j\}, \{v_j\},\dots,V_p)$.
    \item If $|V_i| = 1$ and if the following conditions hold:
    \begin{itemize}[label = $\ast$]
        \item $i>1$
        \item $\kappa|_{V_{i-1}\cup\{v_j\}}$ is proper
        \item $k>j$ for all $v_k\in V_{i-1}$,
    \end{itemize}
    then $\theta (\setcomp) = (V_1,\dots,V_{i-1}\cup \{v_j\}, V_{i+1},\dots, V_p)$.
    \item If not then go to the next vertex $v_{j+1}$, and if $v_j$ is the last one ($j= d$), then we set $\theta(\setcomp) = \setcomp$.
\end{itemize}

This assignment is well defined, this means $\setcomp \in \Pi_{\pair}$ implies $\theta(\setcomp) \in \Pi_{\pair}$. 
It is straightforward that the algorithm is designed in such a way that $\theta$ is an involution, which changes the parity of $p$ for every element unless in one case. 
The order on the vertices implies that if we have an improper edge for $\kappa$ between $v_j\in V_i$ and $v_k\in V_{i-1}$, then we have $j>k$.
Thus, there is only one element in $\Pi_{\pair}$ for which $\theta(\setcomp) = \setcomp$ and it is the set composition $(\{v_1\},\dots,\{v_d\})$.

\end{proof}

\begin{remark}
Note that this proof also yields the analogous result for classical graphs, originally proved by Stanley in \cite{S95}. Specifically, the same argument shows that $\omega(X_G(x_1,x_2,\dots)) = \sum_{(\kappa,\tau)}x_{\kappa(v_1)}\cdots x_{\kappa(v_d)}$ where the sum is taken over all pairs $(\kappa,\tau)$ where $\tau$ is an acyclic orientation of $G$ and $\kappa$ is a coloring compatible with $\tau$.
Stanley's original proof is different as it is based on the reciprocity theorem for P-partitions.
\end{remark}

The reciprocity result for symmetric functions implies, by specialization, a reciprocity result for the chromatic polynomial.

\begin{corollary}[\cite{Z82}, Theorem 3.5]
\label{coro:recip}
Given a signed graph $\Sigma = (V,E,\sigma)$ \begin{equation*}
\chi^b_\Sigma (-2t) =(-1)^{|V|}
\# \left\{
(\kappa,\tau) \mid
\kappa: V \to \{\pm 1,\dots,\pm t\},\;
\tau \text{ acyclic and compatible with } \kappa
\right\}.
\end{equation*}
\end{corollary}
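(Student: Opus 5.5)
The plan is to specialize the symmetric-function reciprocity theorem just proved by applying $\phi_t$, using the commutative diagram \eqref{diag: antipode}.

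First, write $d=|V|$. By Corollary \ref{coro:chromatic-signed}, $\chi^b_\Sigma(2t)=\phi_t(X^\parallel_\Sigma)$ for all $t\in\N$. Since $\chi^b_\Sigma$ is a polynomial, it suffices to identify the polynomial $t\mapsto \chi^b_\Sigma(-2t)$ with $\phi_t$ applied to something computable.

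Second, I use the antipode of the binomial CHA $\K[t]$, where $S(p)(t)=p(-t)$. The map $\phi_t$ is a Hopf morphism, so it intertwines antipodes. Hence
\[
\chi^b_\Sigma(-2t)=S\bigl(\phi_t(X^\parallel_\Sigma)\bigr)=\phi_t\bigl(S(X^\parallel_\Sigma)\bigr)=(-1)^d\,\phi_t\bigl(\omega(X^\parallel_\Sigma)\bigr),
\]
where the last step is \eqref{eq: antipode-omega}, valid because $X^\parallel_\Sigma$ is homogeneous of degree $d$. One point needs care: the variable of $\chi^b_\Sigma$ is $2t$, not $t$. So the first equality should be read as the identity of polynomials $q(t):=\chi^b_\Sigma(2t)=\phi_t(X^\parallel_\Sigma)$, which gives $q(-t)=\chi^b_\Sigma(-2t)$.

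Third, I apply the previous theorem, which writes $\omega(X^\parallel_\Sigma)$ as $\sum_{(\kappa,\tau)} x_{|\kappa(v_1)|}\cdots x_{|\kappa(v_d)|}$. Evaluating under $\phi_t$ sets $x_1=\dots=x_t=1$ and all other variables to $0$. This kills every monomial in which some $|\kappa(v)|>t$ and sends each remaining monomial to $1$. The result is the number of pairs $(\kappa,\tau)$ with $\tau$ an acyclic orientation and $\kappa:V\to\{\pm1,\dots,\pm t\}$ compatible with $\tau$, which is exactly the set counted in the statement.

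The main obstacle I expect is justifying that $\phi_t$ commutes with antipodes in the stated form, namely that the antipode of $\K[t]$ is $p(t)\mapsto p(-t)$. This is standard for the binomial Hopf algebra, where $t$ is primitive, and it is the content of diagram \eqref{diag: antipode}, so I will simply cite it. Alternatively, one can check it on the basis: $\phi_t(M_\alpha)=\binom{t}{\ell(\alpha)}$, and the known formula for $S(M_\alpha)$ matches $\binom{-t}{\ell}=(-1)^\ell\binom{t+\ell-1}{\ell}$. Beyond that, the only remaining check is that the variable rescaling $t\leftrightarrow 2t$ is handled consistently, which it is, since everything is an identity of polynomials in $t$.
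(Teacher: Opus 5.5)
Your proposal is correct and follows the paper's argument: apply $\phi_t$ to the symmetric-function reciprocity theorem, use $S=(-1)^d\omega$ on degree-$d$ elements of $Sym$, and use that the antipode of the polynomial Hopf algebra sends $P(t)$ to $P(-t)$. You also handle the $2t$ rescaling explicitly by treating $\chi^b_\Sigma(2t)=\phi_t(X^\parallel_\Sigma)$ as a polynomial identity in $t$, a step the paper's one-line proof leaves implicit.
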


\begin{proof}
The proof is straightforward taking into account that the antipode on the Hopf algebra of polynomials sends $P(t)$ to $P(-t)$.
\end{proof}
\section{Deformations of the character}

It is now well understood that computing the convolution power of a character is an powerful way to obtain polynomial invariants of combinatorial objects. Moreover, this approach allows to obtain combinatorial reciprocity results for these polynomials via the antipode. This question is deeply studied in \cite{AA23} in the context of Hopf monoids.
The aim of this section is to explore how the \textit{deformation} of a character can lead us to polynomial invariants in two variables.

Given a combinatorial Hopf algebra $(\mathcal{H}, \zeta)$, a \textit{deformation} of $\zeta$ is a morphism of $\mathbb{K}$-algebras $\zeta_x: \mathcal{H}\rightarrow \mathbb{K}[x]$ verifying $\zeta_x(h)|_{x = 0} = \zeta(h)$ for all $h\in \mathcal{H}$. Recall that for any character $\zeta$, $$\zeta^{\ast t}(h) := m^{(t-1)} \circ \zeta^{\otimes t}\circ \Delta^{(t-1)}(h) $$ is a polynomial in $t$ for any $h\in \mathcal{H}$. In the same way, one can consider the convolution power $\zeta_x^{\ast t}$ of the deformation of a character as $\zeta_x^{\ast t}(h) : = m^{(t-1)} \circ \zeta_x^{\otimes t}\circ \Delta^{(t-1)}(h)$ where in this case $m$ represents the product in $\K[x]$. Clearly $\zeta_x^{\ast t}(h) \in \K[x,t]$ for every $h\in \mathcal{H}$. This is how, by defining a deformation of a character, one can obtain a bivariate polynomial invariant.

The structure of this section is as follows. In Subsections \ref{subsec: whitney} and \ref{subsec: dichromatic}, we treat the case of (unsigned) graphs. In each of these subsections, we study a different deformation of $\zeta$, the classical character associated with graphs (see~\eqref{eq: zeta-graphs}). These deformations yield two distinct well-known bivariate polynomial invariants. Finally in \ref{subsec: deformation-signed-graphs} we treat the case of signed graphs.

\subsection{The Whitney number polynomial of (unsigned) graphs}
\label{subsec: whitney}
The first deformation that we consider was defined in \cite{BHM16}. The authors use it to define a deformation of Stanley's chromatic symmetric function as:
 \begin{equation}
 \label{eq: deformation-stanley}
 \Psi_x({G}) = \sum_{V = V_1 \sqcup \cdots \sqcup V_p}x^{\sum_i rk(G|_{V_i})}M_{(|V_1|,\dots, |V_p|)}.
 \end{equation}
 
  In this article, the authors provide an infinite family of graphs that can be distinguished by $\Psi_x$ but not by $\Psi$. We remark that we call \textit{deformations} what the authors call \textit{q-analogues}. We have chosen this terminology since the original character is recovered when the parameter is set to $0$. This deformation is defined as $\zeta_x (G) : = x^{rk(G)}$ for any graph $G$, where $rk(G)$ is the rank of the graphical matroid of $G$. That is, $rk(G)$ is the number of edges in any spanning forest of $G$. It is not dificult to check that $\zeta_x (G)$ is indeed a deformation of $\zeta$, the usual character of graphs.

The following definition is due to Zaslavsky. It first appeared in \cite{Z97} in a more general context of hyperplane arrangements under the name of \textit{Möbius polynomial}. 
\begin{definition} Given a graph $G = (V,E)$ define the Whitney number polynomial $w_G(x,t)$ as \begin{equation}
\label{eq: def-whitney}
    w_G(x,t) := \sum_{A\in \mathcal{F}(G)} x^{rk(A)}\chi_{G/A}(t)
\end{equation} where $\mathcal{F}(G)$ is the set of flats of $G$. In a graph $G = (V,E)$, a flat $A \in \mathcal{F}(G)$ is a subset of $E$ such that each connected component of the graph $(V,A)$ is an induced subgraph of $G$.
\end{definition}

\begin{remark}
Note that $w_G(0,t) = \chi_G(t)$. Also, the Whitney number polynomial is {\em not} the Whitney polynomial consisting in the evaluation in $(y-1,s-1)$ of the Tutte polynomial. 
\end{remark}

For our purpose, it is practical to write this polynomial as a generating function over all possible colorings. The following property was stated by Zaslavsky, we include a proof for the sake of completeness.

\begin{lemma} Given a graph $G = (V,E)$

\begin{equation}
    w_G(x,t) = \sum_{\kappa: V \rightarrow [t]}x^{rk(I(\kappa))}.
\end{equation}
\end{lemma}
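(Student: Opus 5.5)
Here $I(\kappa)$ denotes the impropriety set of $\kappa$: the set of edges $\{u,v\}$ with $\kappa(u)=\kappa(v)$ (loops always belong to it). The plan is to group colorings $\kappa: V\to[t]$ according to the set of edges they color improperly. Two facts drive the argument.

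First, for every coloring $\kappa$, the set $I(\kappa)$ is a flat of $G$. Let $C$ be a connected component of the spanning subgraph $(V,I(\kappa))$. Any two vertices of $C$ are joined by a path of monochromatic edges, so $\kappa$ is constant on $C$. Hence every edge of $G$ with both endpoints in $C$ is monochromatic and lies in $I(\kappa)$. So $C$ is an induced subgraph of $G$, which is exactly the definition of a flat.

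Second, for a fixed flat $A\in\mathcal{F}(G)$, I will build a bijection between colorings $\kappa$ with $I(\kappa)=A$ and proper colorings of $G/A$ with colors in $[t]$. The contraction $G/A$ has one vertex for each component of $(V,A)$, and its edges are the edges of $E\setminus A$. Given $\kappa$ with $I(\kappa)=A$, it is constant on each component of $(V,A)$, so it descends to a coloring $\bar\kappa$ of $G/A$. An edge $e\notin A$ is not monochromatic, so $\bar\kappa$ is proper on $e$. Conversely, take a proper coloring of $G/A$ and pull it back to $\kappa$ on $V$. Edges of $A$ join vertices of the same component, so they are monochromatic. Edges outside $A$ are proper by hypothesis. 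Therefore $I(\kappa)=A$ exactly.

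I must check that $G/A$ has no loops coming from edges outside $A$, since a loop would rule out proper colorings. An edge $e\notin A$ whose endpoints lie in the same component of $(V,A)$ would become such a loop. This cannot happen, because $A$ is a flat and so contains every edge inside each component. Original loops of $G$ are monochromatic, so they belong to $I(\kappa)$ and to every flat. The map is clearly injective in both directions, so this gives the bijection.

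Combining the two facts, the colorings $\kappa:V\to[t]$ are partitioned according to the flat $I(\kappa)$:
\begin{equation*}
\sum_{\kappa: V\to[t]} x^{rk(I(\kappa))}=\sum_{A\in\mathcal{F}(G)} x^{rk(A)}\,\#\{\kappa : I(\kappa)=A\}=\sum_{A\in\mathcal{F}(G)} x^{rk(A)}\chi_{G/A}(t),
\end{equation*}
which is \eqref{eq: def-whitney}. Here $rk(A)$ is the rank of the spanning subgraph $(V,A)$, that is, $|V|$ minus its number of components. The main point needing care is the bijection step, specifically checking that the flat condition is exactly what makes the pulled-back coloring have impropriety set equal to $A$ rather than strictly containing it.
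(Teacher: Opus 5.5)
Your proof is correct and follows the same route as the paper. Like the paper, you show that $I(\kappa)$ is always a flat, group the colorings by their impropriety set, and identify $\#\{\kappa : I(\kappa)=A\}$ with $\chi_{G/A}(t)$; the one difference is that you spell out the bijection with proper colorings of $G/A$, including why flatness prevents loops in $G/A$, which the paper leaves implicit in its final equality.
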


\begin{proof}
    Note that for any coloring $\kappa$, $I(\kappa) \in \mathcal{F}(G)$. Indeed, suppose there is an edge $\{u,v\}\in E$ whose endpoints $u$ and $v$ are in the same connected component of $(V,I(\kappa))$. There must be a path of monochromatic edges between $u$ and $v$, so $\kappa(u) = \kappa(v)$ and then $\{u,v\}\in I(\kappa)$. Thus, 
    $$\sum_{\kappa \rightarrow [t]}x^{rk(I(\kappa))} = \sum_{A\in \mathcal{F}(G)}x^{rk(A)}\#\{\kappa: V \rightarrow [t]:~I(\kappa) = A\} = \sum_{A \in \mathcal{F}(G)}x^{rk(A)}\chi_{G/A}(t).$$
\end{proof}

In light of this lemma, it is easy to show that the image under $\phi_t$ of $\Psi_x(G)$ is exactly the Whitney number polynomial.

\begin{proposition}
    Given a graph $G$, consider the deformation $\zeta_x(G) = x^{rk(G)}$ then 
    \begin{equation}
        \label{eq:whitney-unsigned}
        w_G(x,t) = \phi_t \circ \Psi_x(G) = \zeta_x ^{\ast t}(G).
    \end{equation}
\end{proposition}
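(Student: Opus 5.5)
The plan is to compute both sides as sums over colorings and match them term by term, using the lemma just proved. Here $I(\kappa)$ is the set of monochromatic edges of $\kappa$.

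First, $\zeta_x$ is a character. The rank of a disjoint union is the sum of the ranks, so $\zeta_x$ is multiplicative. Its value on the empty graph is $1$, and $\zeta_x(G)|_{x=0}=\zeta(G)$ because $rk(G)=0$ exactly when $E=\emptyset$. This lets us write the convolution power explicitly. Iterating the coproduct gives
\[
\Delta^{(t-1)}(G)=\sum_{(V_1,\dots,V_t)} G|_{V_1}\otimes\cdots\otimes G|_{V_t},
\]
where the sum runs over all ordered decompositions $V=V_1\sqcup\cdots\sqcup V_t$ into $t$ possibly empty blocks. Hence
\[
\zeta_x^{\ast t}(G)=\sum_{(V_1,\dots,V_t)} x^{\sum_i rk(G|_{V_i})}.
\]

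Second, we identify ordered weak decompositions with colorings. Such a decomposition is the same datum as a coloring $\kappa:V\to[t]$, via $V_i=\kappa^{-1}(i)$. Under this correspondence, $I(\kappa)$ is exactly the disjoint union of the edge sets of the induced subgraphs $G|_{V_i}$.

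Third, we show the ranks agree. A spanning forest of $(V,I(\kappa))$ is the union of spanning forests of the components $G|_{V_i}$, so
\[
rk(I(\kappa))=\sum_i rk(G|_{V_i}).
\]
Combining this with the lemma gives $w_G(x,t)=\sum_\kappa x^{rk(I(\kappa))}=\zeta_x^{\ast t}(G)$.

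Fourth, we handle the middle term $\phi_t\circ\Psi_x(G)$, using the expansion \eqref{eq: deformation-stanley} and $\phi_t(M_\alpha)=\binom{t}{\ell(\alpha)}$. A decomposition into $p$ nonempty blocks, together with a choice of $p$ indices $j_1<\cdots<j_p$ in $[t]$, is the same as an ordered weak decomposition into $t$ blocks whose nonempty blocks sit at positions $j_1,\dots,j_p$. Empty blocks have rank $0$, so this reindexing preserves the exponent of $x$, and the sum equals $\zeta_x^{\ast t}(G)$. Equivalently, one can invoke the general identity $\phi_t\circ\Psi=\zeta^{\ast t}$ for the character $\zeta_x$, working over $\K(x)$ or coefficientwise.

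The main obstacle is only bookkeeping. One must check that the rank is additive over induced subgraphs and that empty blocks contribute trivially. One must also be careful that the paper's $\Psi$ is defined over $\K$, whereas $\zeta_x$ takes values in $\K[x]$. The cleanest fix is to extend scalars to $\K(x)$, or to prove the middle equality directly as in the fourth step.
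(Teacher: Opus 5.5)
Your proof is correct and follows the paper's argument. You expand $\zeta_x^{\ast t}(G)$ over ordered decompositions $V=V_1\sqcup\cdots\sqcup V_t$, identify these with colorings $\kappa:V\to[t]$, use additivity of rank to reach $\sum_\kappa x^{rk(I(\kappa))}$, and conclude with the preceding lemma; your fourth step also spells out the middle equality via $\phi_t(M_\alpha)=\binom{t}{\ell(\alpha)}$, which the paper leaves implicit. One aside is inaccurate but harmless: since the paper allows loops, $rk(G)=0$ does not force $E=\emptyset$, so $\zeta_x$ need not specialize to $\zeta$ at $x=0$, but the identity itself never uses this.
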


\begin{proof}
\begin{align*}
    \zeta_x^{\ast t}(G) = \sum_{V = V_1 \sqcup\cdots \sqcup V_t} \zeta_x(G|_{V_1})\cdots \zeta_x(G|_{V_t}) = \sum_{V = V_1 \sqcup\cdots \sqcup V_t}x^{rk(G|_{V_1}\sqcup \cdots \sqcup G|_{V_t})} = \sum_{\kappa: V \rightarrow [t]}x^{rk(I(\kappa))}
\end{align*}
\end{proof}

Given a graph $G$, by definition, $w_G(x,t)$ counts the number of colorings of certain contractions of $G$. But it also has interesting specializations. For instance the Whitney number polynomial specializes in the $f$-polynomial of the graphical zonotope $\mathcal{Z}[G]$. We explain this below.

Given a graph $G = ([d],E)$, denote $e_{i}$ the coordinate vector with all coordinates equal to 0 but having 1 in the position $i$. The \textit{graphical zonotope} of $G$ is the Minkowski sum $$\mathcal{Z}[G] = \sum_{\{i,j\}\in E} [e_i,e_j]$$ where $[e_{i},e_{j}]$ denotes the segment between $e_{i}$ and $e_{j}$.

Given a graph $G = (V,E)$ on $d$ vertices, Zaslavsky \cite{Z82} proved that:
\begin{equation}
    \sum_{i = 0}^d
f_i(\mathcal{Z}[G])t^i = (-1)^d w_G(-x,-1)
\end{equation} where $f_i(\mathcal{Z}[G])$ denotes the number of faces of dimension $i$ in $\mathcal{Z}[G]$.

Thus \eqref{eq:whitney-unsigned} may be used to compute 
the $f$-polynomial of graphical zonotopes.
We illustrate this on an example.

\begin{example} Let $G = \threepath$
 \begin{align*}
 \label{eq: zeta-alpha}
 w_G(x,t) = \zeta^{\ast t}_{x}(G) & =  \sum_{\alpha\models 3} \binom{t}{\ell(\alpha)}\sum_{\substack{V = V_1 \sqcup \cdots \sqcup V_{\ell(\alpha)} \\|V_i| = \alpha_i~ \forall i}} \zeta_{x}(G\mid_{V_1})\cdots \zeta_{x}(G\mid_{V_{\ell(\alpha)}})\\
  &  = \binom{t}{1} \zeta_x(G) + 2\binom{t}{2}(2 \zeta_x(\vertex ~ \positiveedge) + \zeta_x(\vertex ~\vertex ~ \vertex)) + 3! \binom{t}{3} \zeta_x(\vertex~\vertex ~\vertex)  \\
   & = x^2t + 2xt^2 + t^3 - 2xt - 2t^2 + t.
 \end{align*}

Evaluating in $x = 0$, one finds $\chi_G(t) = t^3-2t^2+t$, the chromatic polynomial of $G$. Also, $(-1)^3 w_G(-x,-1) = x^2 
+ 4x + 4$ is the $f$-polynomial of $\mathcal{Z}[G]$.
 
\end{example}

\subsection{The dichromatic polynomial of (unsigned) graphs}
\label{subsec: dichromatic}

In this subsection, we show how to obtain the dichromatic polynomial as the power convolution of a suitable deformation of $\zeta$. The dichromatic polynomial is related to the Tutte polynomial by a change of variables.

We follow Kauffman's presentation of the dichromatic polynomial \cite{kauffman1989tutte}. It is a well known fact that the chromatic polynomial of a graph $G$ is the polynomial $\chi_G(t)$ generated by the relations
\begin{align*}
& \chi_G(t) = \chi_{G \backslash e}(t)- \chi_{G /e}(t) \\
&\chi_{G\sqcup H}(t) = \chi_G(t)\cdot \chi_H(t),
\end{align*}
together with the condition that
 $\chi_{G}(t) = t$ if $G$ is the graph with a single vertex and no loops. Then one can obtain a generalization of the chromatic polynomial by adding an extra variable $u$ to these relations. The \textit{dichromatic polynomial} of $G$ is the polynomial $D_G(u,t)$ generated by the  relations:
\begin{equation}
\label{eq: relations}
\begin{aligned}
&D_G(u,t) = D_{G \backslash e}(u,t) + u D_{G /e}(u,t) \\
&D_{G\sqcup H}(u,t) = D_G(u,t)\cdot D_H(u,t),
\end{aligned}
\end{equation}
together with the condition that
$D_G(u,t) = t$ if $G$ is the graph with a single vertex and no loops. Clearly $D_G(-1,t) = \chi_G(t)$.

\begin{remark}
    The dichromatic polynomial is equivalent to the Tutte polynomial, they are related by the following change of variables
    \begin{equation}
    \label{eq: change-var-tutte}
T(y,s) = (s-1)^{-|V|}(y-1)^{-\cc(G)}D((s-1),(y-1)(s-1)).
    \end{equation}
\end{remark}

As in the preceding subsection, we are interested in writing this polynomial as a generating function over all possible colorings.

\begin{lemma}
\label{lemma: gen-dichrom}
    \begin{equation}
    \label{eq:gen-dichrom}
    D_G(u,t) = \sum_{\kappa: V \rightarrow [t]}(1+u)^{|I(\kappa)|}
    \end{equation}
\end{lemma}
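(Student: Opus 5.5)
The plan is to show that the right-hand side $F_G(u,t):=\sum_{\kappa: V\rightarrow [t]}(1+u)^{|I(\kappa)|}$ satisfies the same defining relations \eqref{eq: relations} and the same initial condition as $D_G(u,t)$. Here $I(\kappa)$ denotes, as in the previous subsection, the set of monochromatic edges of $\kappa$, that is, the edges $\{a,b\}$ with $\kappa(a)=\kappa(b)$; loops always belong to $I(\kappa)$. Since the relations determine $D_G$ uniquely, by induction on the number of edges, this gives $D_G=F_G$.

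The initial condition and multiplicativity are immediate. For a single vertex without loops, $I(\kappa)=\emptyset$ for each of the $t$ colorings, so $F=t$. A coloring of $G\sqcup H$ is a pair of colorings, and the monochromatic edge set is the disjoint union of the two, so the sum factors as $F_{G\sqcup H}=F_G F_H$. Should loops need a base value, note that a loop is always monochromatic, so each loop contributes a factor $(1+u)$. This agrees with the deletion–contraction relation for a loop, since then $G/e=G\backslash e$ and $D_G=(1+u)D_{G\backslash e}$.

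The main step is deletion–contraction for a non-loop edge $e=\{a,b\}$. I would split the colorings $\kappa$ of $G$ according to whether $\kappa(a)=\kappa(b)$.
\begin{itemize}
\item If $\kappa(a)\neq\kappa(b)$, then $I_G(\kappa)=I_{G\backslash e}(\kappa)$.
\item If $\kappa(a)=\kappa(b)$, then $|I_G(\kappa)|=|I_{G\backslash e}(\kappa)|+1$.
\end{itemize}
Hence
\begin{equation*}
F_G=F_{G\backslash e}+u\sum_{\kappa:\,\kappa(a)=\kappa(b)}(1+u)^{|I_{G\backslash e}(\kappa)|}.
\end{equation*}
Colorings of $G\backslash e$ with $\kappa(a)=\kappa(b)$ are in bijection with colorings of $G/e$, where the merged vertex receives the common color. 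Under this bijection the monochromatic edges of $G\backslash e$ correspond exactly to the monochromatic edges of $G/e$. Parallel edges to $e$ become loops in $G/e$, and both they and those loops are monochromatic. So the remaining sum equals $F_{G/e}$, and $F_G=F_{G\backslash e}+uF_{G/e}$.

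The only delicate point is this bookkeeping of edges parallel to $e$ and of loops under contraction, which requires treating $G/e$ as a multigraph. Once that convention is fixed, the induction on $|E|$ closes the argument.
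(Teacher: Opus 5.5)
Your proof is correct and follows the paper's route. The paper's proof only asserts that the coloring generating function satisfies the relations defining $D_G(u,t)$, and you supply the verification: the base case, multiplicativity, and deletion--contraction via the split on whether $\kappa(a)=\kappa(b)$, with parallel edges and loops handled correctly. You also add the uniqueness-by-induction-on-$|E|$ step that the paper leaves implicit.
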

\begin{proof}
    To see this, it suffices to show that the generating function satisfies the relations \ref{eq: relations} defining $D_G(u,t)$.
\end{proof}

\begin{proposition} In the CHA of graphs ($\mathcal{G},\zeta$), let $\zeta_x$ be the deformation of $\zeta$ defined by $\zeta_x(G) = x^{|E|}$ for $G = (V,E)$ a graph. Then
    \begin{equation}
    D_G(x-1,t) = \zeta^{\ast t}_x (G).
    \end{equation}
\end{proposition}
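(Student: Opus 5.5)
The argument mirrors the proof given above for the Whitney number polynomial: expand the convolution power over colorings and then invoke Lemma~\ref{lemma: gen-dichrom}.

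First I check that $\zeta_x$ is a deformation of $\zeta$. The number of edges is additive under disjoint union, so $\zeta_x(G\sqcup H)=x^{|E(G)|+|E(H)|}=\zeta_x(G)\zeta_x(H)$, and $\zeta_x(\emptyset)=1$. Hence $\zeta_x$ is an algebra morphism $\mathcal{G}\to\K[x]$. Setting $x=0$ gives $1$ exactly when $E=\emptyset$ and $0$ otherwise, which is $\zeta$. (With the convention $0^0=1$, a graph with loops gets value $0$ at $x=0$.)

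Next I expand the iterated coproduct. By definition, $\Delta^{(t-1)}(G)=\sum G|_{V_1}\otimes\cdots\otimes G|_{V_t}$, summed over ordered decompositions $V=V_1\sqcup\cdots\sqcup V_t$ into possibly empty blocks. These decompositions are in bijection with colorings $\kappa:V\to[t]$ via $V_i=\kappa^{-1}(i)$. Therefore
\[
\zeta_x^{\ast t}(G)=\sum_{\kappa:V\to[t]}\prod_{i=1}^t x^{|E(G|_{\kappa^{-1}(i)})|}=\sum_{\kappa:V\to[t]}x^{|I(\kappa)|}.
\]
Here $I(\kappa)$ is the set of monochromatic edges, that is, edges whose endpoints receive the same color. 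A loop is always monochromatic and always lies in the induced subgraph on the block containing its vertex, so loops are counted correctly. Likewise, every edge of $G|_{V_i}$ is monochromatic, and every monochromatic edge lies in exactly one $G|_{V_i}$.

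Finally, I substitute $u=x-1$ in Lemma~\ref{lemma: gen-dichrom}. This gives $D_G(x-1,t)=\sum_{\kappa}(1+(x-1))^{|I(\kappa)|}=\sum_\kappa x^{|I(\kappa)|}$, which matches the expression above.

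The only step with real content is Lemma~\ref{lemma: gen-dichrom} itself, which the paper proves only in outline. If I were to verify it, I would fix a non-loop edge $e$ and split the colorings of $G\setminus e$ according to whether $e$ is monochromatic. Colorings where $e$ is monochromatic correspond to colorings of $G/e$ and contribute an extra factor $(1+u)$ in $G$; colorings where it is not contribute the same in $G$ and in $G\setminus e$. This yields the deletion–contraction relation, and multiplicativity under disjoint union and the single-vertex value $t$ are immediate. Loops would need care, because the recursion $D_G=D_{G\setminus e}+uD_{G/e}$ with $G/e=G\setminus e$ for a loop gives the factor $(1+u)$, consistent with the generating function. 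So I expect no genuine obstacle.
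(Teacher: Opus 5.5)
Your proof is correct and follows the paper's argument. You identify ordered decompositions $V=V_1\sqcup\cdots\sqcup V_t$ with colorings $\kappa:V\to[t]$ to get $\zeta_x^{\ast t}(G)=\sum_{\kappa}x^{|I(\kappa)|}$, then apply Lemma~\ref{lemma: gen-dichrom} with $u=x-1$; your deletion--contraction check of that lemma, including the handling of loops, supplies the verification the paper only sketches.
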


\begin{proof}
Clearly, $\zeta_x$ is a deformation of $\zeta$ and
\begin{align*}
    \zeta_x^{\ast t}(G) = \sum_{V = V_1 \sqcup\cdots \sqcup V_t} \zeta_x(G|_{V_1})\cdots \zeta_x(G|_{V_t}) = \sum_{V = V_1 \sqcup\cdots \sqcup V_t}x^{|G|_{V_1}\sqcup \cdots \sqcup G|_{V_t}|} = \sum_{\kappa: V \rightarrow [t]}x^{|I(\kappa)|}.
\end{align*}
\end{proof}

\begin{example}
    Consider the graph $G = \exampletutte$.
    \begin{align*}
    \zeta_x^{\ast t}(G) & =  \sum_{\alpha\models 3} \binom{t}{\ell(\alpha)}\sum_{\substack{V = V_1 \sqcup \cdots \sqcup V_{\ell(\alpha)} \\|V_i| = \alpha_i~ \forall i}} \zeta_{x}(G\mid_{V_1})\cdots \zeta_{x}(G\mid_{V_{\ell(\alpha)}}) \\
                     & = \binom{t}{1}\zeta_{x}(G) + 2 \binom{t}{2}(\zeta_{x} (\exampletutteuno) + \zeta_{x} (\exampletuttedos) + \zeta_{x} (\exampletuttetres)) + 3! \binom{t}{3}\zeta_{x}(\exampletuttedos)  \\
                     & = t x^4 + t(t-1)(x^2+x+x^3)  + t(t-1)(t-2)x.\end{align*}
Thus $D_G(u,t) = t (u+1)^4 + t(t-1)((u+1)+(u+1)^2+(u+1)^3)+t(t-1)(t-2)(u+1)$. By applying the change of variables in \ref{eq: change-var-tutte} the Tutte polynomial of $G$ is $$T_G(y,s) = y^2 s + s^2 y.$$
\end{example}

\begin{remark}
    In \cite{FM25} the authors use the formalism of double bialgebras to get an algebraic interpretation of a variant of the Tutte polynomial, namely the Fortuin-Kasteleyn polynomial. They deduce new simpler proofs of combinatorial results on these invariants.
\end{remark}

\subsection{Deformations in the CHA of signed graphs}
\label{subsec: deformation-signed-graphs}

We now explore how to use the same techniques to find bivariate polynomial invariants for signed graphs. The Whitney number polynomial of a signed graph was introduced by Zaslavsky in \cite{Z82}. It is defined in a complete analogous way as the classical case. Given a signed graph $\Sigma = (V,E,\sigma)$, the \textit{balanced} Whitney number polynomial is: 
$$w^b_\Sigma(x,2t) = \sum_{\kappa:V \rightarrow \{\pm 1,\dots, \pm t\}}x^{rk(I(\kappa))},$$ 
where for an edge set $A\subseteq E$, $rk (A) := |V|-\pi_b(\Sigma\vert A)$ with $\pi_b(\Sigma\vert A)$ the number of balanced components of $\Sigma\vert A : = (V,A,\sigma \vert_A)$. This polynomial satisfies an equation analogous to \eqref{eq: def-whitney} in the signed case. When evaluated at negative integers, it counts the number of acyclic orientations of certain
contractions of $\Sigma$ \cite[Corollary 3.6]{Z82}.
We are interested in expressing the balanced Whitney number polynomial as a convolution power of a certain deformation of the character $\zeta$.

\begin{definition}
Given a connected signed graph $\Sigma = (V,E, \sigma)$ and $S\subseteq V$, define $I_\Sigma(S)$ to be the set of edges $e\in E$ satisfying one of the following conditions:
\begin{itemize}
\item $\sigma(e)=+$ and the endpoints of $e$ are in $S$
\item $\sigma(e) = +$ and the endpoints of $e$ are in $V\backslash S$ 
\item $\sigma(e) = -$ and $e$ has one endpoint in $S$ and the other one in $V\backslash S$.
\end{itemize}
Then define $$\zeta_x(\Sigma) := \sum_{A \subseteq V} x^{rk(I_\Sigma(A))}. $$
\end{definition}

It is not difficult to check that $\zeta_x$ is a deformation of $\zeta$, when $x = 0$, the only terms that may survive are those having $I_\Sigma(A) = \emptyset$. This means that $\Sigma$ admits a decomposition as the one in Lemma \ref{lemma: decomposition} and hence it admits exactly two. Also, as given two disjoint set of edges $A_1$, $A_2 \subseteq E$, $rk(A_1\sqcup A_2) = rk(A_1) + rk(A_2)$, $\zeta_x$ is multiplicative.

\begin{proposition}
\label{prop: conv-zeta-signed}
The balanced Whitney number polynomial may be obtained by the convolution of the deformed character $\zeta_x$:
\begin{equation}
 w_\Sigma^b(x,2t) = \zeta^{\ast t}_x (\Sigma).
\end{equation}
\end{proposition}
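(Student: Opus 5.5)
We expand the convolution power and then build a bijection with signed colorings, as in the unsigned case (Proposition on $w_G$). By definition of $\Delta^{(t-1)}$ and multiplicativity,
\[
\zeta_x^{\ast t}(\Sigma)=\sum_{V=V_1\sqcup\cdots\sqcup V_t}\ \prod_{i=1}^t \zeta_x(\Sigma|_{V_i})
=\sum_{V=V_1\sqcup\cdots\sqcup V_t}\ \sum_{A_1\subseteq V_1,\dots,A_t\subseteq V_t}\ \prod_{i=1}^t x^{rk(I_{\Sigma|_{V_i}}(A_i))}.
\]
Here the sum runs over ordered decompositions into $t$ possibly empty blocks. We extend $I_\Sigma(A)$ and $\zeta_x$ to disconnected graphs in the obvious componentwise way; the definition of $I_\Sigma(S)$ makes sense for any signed graph. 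The multiplicativity remark before the statement then justifies working with $\Sigma|_{V_i}$ even when it is disconnected.

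Next we set up the bijection. The data $\bigl((V_1,\dots,V_t),(A_1,\dots,A_t)\bigr)$ corresponds to the coloring $\kappa:V\to\{\pm1,\dots,\pm t\}$ given by $\kappa|_{A_i}=i$ and $\kappa|_{V_i\setminus A_i}=-i$. This is clearly a bijection between such data and all signed colorings with colors in $\pm[t]$; it is the same correspondence used in the proof of the theorem $\Psi(\Sigma)=X^\parallel_\Sigma$ via Lemma~\ref{lemma: decomposition}.

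The key step is to identify the improper edges. Recall that $I(\kappa)$ is the set of edges $e=\{u,v\}$ with $\kappa(u)=\sigma(e)\kappa(v)$. We check that
\[
I(\kappa)=\bigsqcup_{i=1}^t I_{\Sigma|_{V_i}}(A_i).
\]
If $e$ joins $V_i$ and $V_j$ with $i\neq j$, then $|\kappa(u)|\neq|\kappa(v)|$, so $e$ is proper. If both endpoints lie in $V_i$ and $\sigma(e)=+$, then $e$ is improper exactly when $\kappa(u)=\kappa(v)$, that is, when both endpoints lie in $A_i$ or both lie in $V_i\setminus A_i$. If $\sigma(e)=-$, then $e$ is improper exactly when $\kappa(u)=-\kappa(v)$, that is, when one endpoint lies in $A_i$ and the other in $V_i\setminus A_i$. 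These are precisely the three conditions defining $I_{\Sigma|_{V_i}}(A_i)$. Loops are consistent with this: a positive loop is always improper and always lies in $I$, while a negative loop never does.

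It remains to verify additivity of the rank, which I expect to be the only delicate point: $rk(I(\kappa))=\sum_i rk(I_{\Sigma|_{V_i}}(A_i))$, where each rank on the right is computed inside $\Sigma|_{V_i}$. Since $rk(A)=|V|-\pi_b(\Sigma|A)$ and the edge sets $I_{\Sigma|_{V_i}}(A_i)$ live on the disjoint vertex sets $V_i$, the graph $(V,I(\kappa))$ is the disjoint union of the graphs $(V_i,I_{\Sigma|_{V_i}}(A_i))$. Both $|V|$ and the number of balanced components are therefore additive, and the identity follows. This requires that the rank of $I_{\Sigma|_{V_i}}(A_i)$ be taken relative to the vertex set $V_i$, which is the natural reading of $\zeta_x(\Sigma|_{V_i})$. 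Summing over $\kappa$ then gives $\sum_{\kappa}x^{rk(I(\kappa))}=w^b_\Sigma(x,2t)$.
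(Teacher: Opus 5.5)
Your proposal is correct and follows essentially the same route as the paper: expand $\zeta_x^{\ast t}(\Sigma)$ over ordered decompositions $V=V_1\sqcup\cdots\sqcup V_t$ together with a choice of subset, send each datum bijectively to the coloring with $\kappa|_{A_i}=i$, $\kappa|_{V_i\setminus A_i}=-i$, and identify $I(\kappa)$ with the corresponding $I$-set. You also write out the improper-edge check and the rank additivity, which the paper leaves as ``not difficult to check.'' The rank point is in fact not delicate: isolated vertices are balanced components, so $rk$ of an edge set does not depend on the ambient vertex set.
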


\begin{proof}
We may write
$$\zeta^{\ast t}_x (\Sigma) = \sum_{V_1 \sqcup\cdots\sqcup V_t} \zeta_x (\Sigma |_{V_1}\sqcup \cdots \sqcup \Sigma |_{V_t}) = \sum_{V_1 \sqcup\cdots\sqcup V_t} \sum_{S \subseteq V} x^{rk(I_{\Sigma |_{V_1}\sqcup \cdots \sqcup \Sigma |_{V_t}}(S))}.$$

But a decomposition $V = V_1 \sqcup \cdots \sqcup V_t $ and a choice $S\subseteq V$ induces a coloring with color set $\{\pm 1,\dots, \pm t\}$ by letting $\kappa|_{V_i \cap S} = i$ and $\kappa|_{V_i \cap (E \backslash S)} = -i$ for all $i = 1,\dots t$. And this correspondence is bijective. It is not difficult to check that the improper set of this coloring is $I_{\Sigma |_{V_1}\sqcup \cdots \sqcup \Sigma |_{V_t}}(S)$.
\end{proof}

\begin{remark}
    The \textit{unbalanced} Whitney number polynomial is defined 
    in the same way as the balanced one:
    $$
    w_\Sigma(x,2t+1) = \sum_{\kappa:V \rightarrow \{0,\pm 1,\dots, \pm t\}}x^{rk(I(\kappa))}.
    $$
    The unbalanced Whitney number polynomial specializes in the $f$-polynomial of the associated zonotope \cite[Corollary 4.1'']{Z82}. But, as stated in the introduction, this case will be treated in a forthcoming work.
\end{remark}

\begin{example}

Note that for any signed graph $\Sigma$ 
$$\zeta_x(\Sigma) = \sum_{A \subseteq V} x^{rk(I_\Sigma(A))} = 2\cdot \sum_{\substack{A \subseteq V\\\vert A \vert \leq \vert V \vert /2}} x^{rk(I_\Sigma(A))}$$ as $I_\Sigma (A) = I_\Sigma(E \backslash A)$. Consider the signed graph $\Sigma = \othersignedtriangle$,

\begin{align*}
\zeta_x(\Sigma) = 2\cdot  ( x^{rk(\twopositiveedges)}+  x^{rk(\negativeedge)} +  x^{rk(\signedtriangle)} + x^{rk(\twosignededges)}) = 6x^2 + 2x.
\end{align*}
And the balanced Whitney number polynomial can be computed as

\begin{align*}
    w^b_\Sigma (x,2t) & =  \sum_{\alpha\models 3} \binom{t}{\ell(\alpha)}\sum_{\substack{V = V_1 \sqcup \cdots \sqcup V_{\ell(\alpha)} \\|V_i| = \alpha_i~ \forall i}} \zeta_{x}(\Sigma\mid_{V_1})\cdots \zeta_{x}(\Sigma\mid_{V_{\ell(\alpha)}}) \\
                     & = \binom{t}{1}\zeta_{x}(\Sigma) + 2 \binom{t}{2}(\zeta_{x} (\vertex ~ \positiveedge) + \zeta_{x} (\vertex ~ \negativeedge) + \zeta_{x} (\vertex ~ \positiveandnegativeedge)) + 3! \binom{t}{3}\zeta_{x}( \vertex ~ \vertex ~ \vertex)  \\
                     & = \binom{t}{1} (6x^2+2x) + 2 \binom{t}{2} (16x + 8)  + \binom{t}{3} 48.\end{align*}

\end{example}

In \cite{Z95} Zaslavsky defines the dichromatic polynomial for signed graphs, it satisfies an equation analogous to \eqref{eq:gen-dichrom} \cite[Corollary 4.4]{Z95}. This polynomial in two variables can be obtained as a convolution product of the following deformation
$$\zeta_x(\Sigma) := \sum_{A \subseteq V} x^{|I_\Sigma(A)|}.$$

We omit the details, since everything occurs exactly in the same way as in Proposition \ref{prop: conv-zeta-signed}.

We conclude with an open question.
In \cite{GLRV21}, the authors point out that the dichromatic polynomial defined by Zaslavsky, while it does specialize to the chromatic polynomial, does not specialize to the polynomial that counts the number of “nowhere-zero flows.” And because of this, it does not generalize the classical dichromatic polynomial. As a solution, they propose a Tutte polynomial in three variables that satisfies both specializations. Furthermore, this polynomial simultaneously handles the cases of colorings with {\em and} without a color $0$. It is certainly interesting to explore this polynomial from the perspective of signed combinatorial Hopf algebra with the techniques developed in this article.

\bibliographystyle{amsalpha}
\bibliography{refs}

\end{document}